\newtheorem{theorem}{Theorem}[section]
\newtheorem{lemma}{Lemma}[section]
\newtheorem*{proof}{Proof}[section]
\newtheorem{rem}{Remark}[section]
\newcommand{\RNum}[1]{\uppercase\expandafter{\romannumeral #1\relax}}
\begin{document}

\title{Multi-level Monte Carlo path integral molecular dynamics  for thermal average calculation in the nonadiabatic regime}

\author[1]{Xiaoyu Lei}
\affil[1]{School of Mathematical Sciences, Peking University, Beijing 100871, China. \authorcr 1500010717@pku.edu.cn}

\author[2]{Zhennan Zhou}
\affil[2]{Beijing International Center for Mathematical Research, Peking University, Beijing 100871, P.R. China.\authorcr
zhennan@bicmr.pku.edu.cn}

\date{\today}

\maketitle

\begin{abstract}

With the path integral approach, the thermal average in multi-electronic-state quantum systems  can be approximated by the ring polymer representation on an extended configuration space, where the additional degrees of freedom are associated with the surface index of each bead.  The primary goal of this work is to propose a more efficient sampling algorithm for the calculation of such thermal averages. We reformulate the extended ring polymer approximation according to the configurations of the surface indexes, and by introducing a proper reference measure,  the reformulation is recast as a ratio of two expectations of function expansions. By quantitatively estimating the sub-estimators, and minimizing the total variance of the sampled average, we propose a multi-level Monte Carlo path integral molecular dynamics method (MLMC-PIMD) to achieve an optimal balance of computational cost and accuracy.

\end{abstract}

\section{Introduction}\label{sec-1}

\hspace{4mm}
Simulation of complex chemical system at the quantum level has always been a central and challenging task in the theoretical and computational chemistry, since a direct simulation of a quantum system is often numerically infeasible. While most numerical approaches are based on the Born-Oppenheimer approximation and the adiabatic assumption is usually taken, this assumption is no longer valid  when the interaction between multiple electronic energy surfaces cannot be neglected. In such scenarios, one needs to consider the multi-electronic-state systems. Readers can refer to \cite{makri1999time,stock2005classical,kapral2006progress} for more discussion.

\par  In this paper, we focus on the thermal average taking the following form
\begin{equation}\label{A}
    \langle\widehat{A}\rangle=\frac{\operatorname{Tr}_{n e}\left[e^{-\beta \widehat{H}} \widehat{A}\right]}{\operatorname{Tr}_{n e}\left[e^{-\beta \widehat{H}}\right]}
\end{equation}
where $\widehat{H}$ is a matrix-form Hamiltonian operator, $\widehat{A}$ is a matrix-form observable and $\beta$ is the inverse temperature given by $\frac{1}{k_{B}T}$ with $k_{B}$ the Boltzmann constant and T the temperature. And 
\[
\operatorname{Tr}_{n e}=\operatorname{Tr}_{n}\operatorname{Tr}_{e}=\operatorname{Tr}_{L^{2}\left(\mathbb{R}^{d}\right)}\operatorname{Tr}_{C^{2}}
\]
denotes the trace with respect to the nuclear and electronic degrees of freedom.  The prevailing numerical methods for thermal average calculation are based on the ring polymer representation, mapping the quantum particle to a ring polymer consists of its replica on the phase space\cite{berne1986simulation,markland2008efficient,lu2017path,LiuPath,lu2018accelerated,tao2018path}. The ring polymer representation was originally proposed in \cite{feynman1972lectures} and it then became the foundation for many numerical methods, which are mainly categorized  into two groups: the path integral Monte Carlo methods (see, e.g. \cite{chandler1981exploiting,berne1986simulation}) and the path integral molecular dynamics approaches (see, e.g. \cite{markland2008efficient,ceriotti2010efficient}). Besides, in recent years, in spite of vast applications in science, different mathematical aspects of the quantum thermal averages have been explored, such as the continuum limit, the preconditioning techniques \cite{lu2018continuum} and the Bayesian inversion problem \cite{chen2020bayesian}. 
\par
The conventional ring polymer representation does not directly apply to the nonadiabatic cases since multiple energy surfaces are involved.  One strategy to overcome this difficulty is to use the mapping variable approach\cite{stock1997semiclassical,meyera1979classical,stock2005classical}, and its basic idea is to replace the multi-electronic-state system by an augmented scalar system where the extra dimensions correspond to the electronic degrees of freedom \cite{stock1997semiclassical}. Another alternative strategy is to derive the extended ring polymer representation as in \cite{lu2017path,LiuPath}, following the spirit of the pioneering work of Schmidt and Tully \cite{schmidt2007path}, and the discreteness of the electronic states are preserved. In the extended ring polymer representation, each bead in the ring polymer is associated with a surface index showing which energy surface it lies in. And the sampling is then carried out in the extended space consisting of the position, momentum and surface index of each bead\cite{lu2017path}. Thus the thermal average is approximately transformed to the average over the extended configuration space of ring polymers. Another main contribution of \cite{lu2017path} is that a path integral molecular dynamics with surface hopping (abbreviated by PIMD-SH) dynamics sampling method is developed for sampling of the equilibrium distribution on the extended ring polymer space. The PIMD-SH dynamics ergodically samples the equilibrium distribution and it is shown to satisfy the detailed balance condition. When it comes to sampling off-diagonal observables, the straightforward PIMD-SH method becomes less efficient when sampling the configuration with kinks (a kink means the surface indexes of two consecutive beads are different), which is because when the configuration contains more kinks, it contributes exponentially less to the thermal average as shown in Section \ref{sec-3}. The infinite swapping limit of PIMD-SH was introduced and studied in \cite{lu2018accelerated} to remedy this issue by averaging over the surface indexes of the beads, of which the formulation essentially agrees with that in another independent work \cite{LiuPath}. In \cite{lu2018accelerated}, a multiscale integrator was proposed in the spirit of the heterogeneous multiscale method (abbreviated by HMM) \cite{e2011principles,vanden-eijnden2003,e2003,e33ddf9c3ffc44b78dd728bd6b4cff54} to improve the efficiency of the infinite swapping limit.

\par In this paper, we aim to further enhance the sampling efficiency of the computation of such thermal averages by leveraging the unique structure of the extended ring polymer representation of a multi-electronic-state system. It has been noticed in the previous work \cite{lu2017path} that when sampling the extended ring polymer representation, it is rare for a sampling path to visit the configurations with a large number of kinks, and the total contributions of such configurations are asymptotically small. This observation motivates us to consider the the extended ring polymer representation with a certain truncation. However, when the kinks are present, the contribution of off-diagonal component of the observable is amplified due to the kink energy. Thus, a rational and proper truncation is possible only one rewrites the extended ring polymer representation by the number of kinks and carry out detailed error analysis. 

Based on the reformation by the number of kinks, and the approximation estimates established in the error analysis, we are able to propose an improved sampling strategy in two steps. First, by introducing a proper reference measure, the reformulation can be viewed as a ratio of two expectations where the target functions to be sampled are in a form of series expansions with respect to the kink numbers respectively. Such a representation immediately implies a sampling method (to be specified in Section \ref{sec-4}), which we name the path integral molecular dynamics with a reference measure (abbreviated by RM-PIMD). Unlike the original PIMD-SH method, where the sampled value is asymptotically singular in the presence of kinks, in the RM-PIMD method, the proper introduction of the reference measure leads to a cancellation of the singular terms in the observable functions to be sampled, and thus, yields a more stable numerical performance. 

Next, by examining the sub-estimators in the expansions, we observe that as the kink number increases (while less than the half of the total bead number), the sampling difficulties increases dramatically while the variances of sub-estimators decreases asymptotically. Therefore, we adopt the spirit of the multi-level Monte Carlo method \cite{giles2013multilevel,Giles2008MultilevelMC,giles_2015}, and propose a second scheme, which we name MLMC-PIMD. It optimizes the numbers of samples allocated to each sub-estimator to minimize the variance of the total estimator, which is subject to the constraint that the total computational cost is fixed. In additional, the quantitative estimates in Section \ref{sec-3} guarantees that, a certain truncation at the number of kinks can lead to minimal error while easily enhancing the efficiency of both algorithms.

The paper is outlined as follows. In Section \ref{sec-2}, we give a brief review of the extended ring polymer representation for  two-state systems in the diabatic representation and the PIMD-SH method. We prove in Section \ref{sec-3} a quantitative error estimate for the truncated ring polymer approximation for thermal average. In Section \ref{sec-4}, based on the reference measure perspective, we further propose the Multi-level Monte Carlo path integral molecular dynamics (MLMC-PIMD) method  to minimize the variance with a given computational cost. In Section \ref{sec-5}, extensive numerical experiments are given to show the approximation property of truncated ring polymer representation and the validation of MLMC-PIMD method. In Section \ref{sec-6}, we summarize our new results and point out some possible directions for further research.

\section{Preliminary}\label{sec-2}
\subsection{Extended ring polymer representation for diabatic two-state systems}
\hspace{4mm}  In \eqref{A}, the Hamiltonian of a two-state system in a diabatic representation can be expressed as
\begin{equation*}
    \widehat{H}=\widehat{T}+\widehat{V}=\frac{1}{2 M}\left(\begin{array}{ll}{\hat{p}^{2}}& \\ &{\hat{p}^{2}}\end{array}\right)+\left(\begin{array}{ll}{V_{00}(\hat{q})} & {V_{01}(\hat{q})} \\ {V_{10}(\hat{q})} & {V_{11}(\hat{q})}\end{array}\right)
\end{equation*}
where $\hat{p}$ and $\hat{q}$ denote the momentum and position operators, and $M$ is the mass of nuclei (for simplicity, we assume all nuclei have the same mass). And the potential matrix
\begin{equation*}
    V(q)=\left(\begin{array}{ll}{V_{00}(q)} & {V_{01}(q)} \\ {V_{10}(q)} & {V_{11}(q)}\end{array}\right)
\end{equation*}
is a Hermitian matrix. For simplicity, we assume $V_{01}=V_{10}$, therefore they are real. In addition, we assume $V_{01}$ doesn't change sign for all $q$. Then the Hilbert space of the system is $L^{2}(\mathbb{R}^{d}) \otimes C^{2}$. For simplicity, we assume the matrix-form observable $\widehat{A}$ only depends on position $q$, which can be written as
\begin{equation*}
    \widehat{A}(\hat{q})=\left(\begin{array}{ll}{A_{00}(\hat{q})} & {A_{01}(\hat{q})} \\ {A_{10}(\hat{q})} & {A_{11}(\hat{q})}\end{array}\right).
\end{equation*}
\par
According to Section \RNum{2}A of \cite{lu2017path}, for a sufficiently large $N$, \eqref{A} can be approximated by an extended ring polymer as
\begin{gather}
     \operatorname{Tr}_{n e}\left[e^{-\beta \widehat{H}} \widehat{A}\right] \approx \frac{1}{(2 \pi)^{d N}}  \int_{\mathbb{R}^{2 d N}} \mathrm{d} \bm{q} \mathrm{d} \bm{p} \sum_{\bm{\ell} \in\{0,1\}^{N}}    e^{ -\beta_{N} H_{N}(\bm{q},\bm{p},\bm{\ell})} W_{N}[A](\bm{q},\bm{p},\bm{\ell})\label{num} \\
     \operatorname{Tr}_{n e}\left[e^{-\beta \widehat{H}}\right]  \approx \frac{1}{(2 \pi)^{d N}} Z_{N} :=\frac{1}{(2 \pi)^{d N}} \int_{\mathbb{R}^{2 d N}} \mathrm{d} \bm{q} \mathrm{d} \bm{p} \sum_{\bm{\ell} \in\{0,1\}^{N}}  e^{-\beta_{N} H_{N}(\bm{q},\bm{p},\bm{\ell})}\label{den}
\end{gather}
where $\beta_{N}=\frac{\beta}{N}$ and we assume $\hbar=1$ in the quantum system. Let $I$ denote the extended ring polymer representation with $N$ beads which we use to approximate the thermal average $\langle\widehat{A}\rangle$ we want to compute in this article. Namely, $I$ takes the form:
\begin{equation}\label{I}
I:=
\frac{\int_{\mathbb{R}^{2d N}} \mathrm{d} \bm{q} \mathrm{d} \bm{p} \sum_{\bm{\ell} \in\{0,1\}^{N}}    e^{ -\beta_{N} H_{N}(\bm{q},\bm{p},\bm{\ell})} W_{N}[A](\bm{q},\bm{p},\bm{\ell})}{\int_{\mathbb{R}^{2d N}} \mathrm{d} \bm{q} \mathrm{d} \bm{p} \sum_{\bm{\ell} \in\{0,1\}^{N}}  e^{-\beta_{N} H_{N}(\bm{q},\bm{p},\bm{\ell})}}
.
\end{equation}
Each bead is described by its position, momentum and surface index. The configuration of N-bead extended ring polymer representation $(\bm{q},\bm{p},\bm{\ell})=(q_{1},\dots,q_{N},p_{1},\dots,p_{N},\ell_{1},\dots,\ell_{N})$ ($  \bm{q}=(q_{1},\dots,q_{N}) \in R^{d N},\bm{p}=(p_{1},\dots,p_{N}) \in R^{d N}, \bm{\ell}=(\ell_{1},\dots,\ell_{N})\in \{0,1\}^{N} $) lies in the extended (ring polymer) configuration space $S:=\mathbb{R}^{2d N}\times\{0,1\}^{N}$, $N$ copies of phase space with surface indexes. And the Hamiltonian $H_{N}(\bm{q},\bm{p},\bm{\ell})$ is defined as
\begin{gather}
    H_{N}(\bm{q},\bm{p},\bm{\ell})=\sum_{k=1}^{N}\left\langle \ell_{k}\left|G_{k}\right| \ell_{k+1}\right\rangle\quad(\ell_{N+1}=\ell_{1}),\label{H}\\
    \left\langle \ell\left|G_{k}\right| \ell^{\prime}\right\rangle=\left\{\begin{array}{ll}{\frac{p_{k}^{2}}{2 M}+\frac{M\left(q_{k}-q_{k+1}\right)^{2}}{2\left(\beta_{N}\right)^{2}}+V_{\ell\ell}\left(q_{k}\right)-\frac{1}{\beta_{N}} \ln \left(\cosh \left(\beta_{N}\left|V_{01}\left(q_{k}\right)\right|\right)\right),} & {\ell=\ell^{\prime}} \\ {\frac{p_{k}^{2}}{2 M}+\frac{M\left(q_{k}-q_{k+1}\right)^{2}}{2\left(\beta_{N}\right)^{2}}+\frac{V_{00}\left(q_{k}\right)+V_{11}\left(q_{k}\right)}{2}-\frac{1}{\beta_{N}} \ln \left(\sinh \left(\beta_{N}\left|V_{01}\left(q_{k}\right)\right|\right)\right).} & {\ell \neq \ell^{\prime}}\end{array}\right.\label{h}
\end{gather}
\par  For observable $\widehat{A}(\hat{q})$, the function $W_{N}[A]$ takes the form:
\begin{equation}\label{W}
    W_{N}[A](\bm{q},\bm{p},\bm{\ell})=\frac{1}{N} \sum_{k=1}^{N}\left\langle \ell_{k}\left|A\left(q_{k}\right)\right| \ell_{k}\right\rangle -  e^{\beta_{N}\left\langle \ell_{k}\left|G_{k}\right| \ell_{k+1}\right\rangle-\beta_{N}\langle\overline{l}_{k}\left|G_{k}\right| \ell_{k+1}\rangle}\langle \ell_{k}\left|A\left(q_{k}\right)\right| \overline{\ell}_{k}\rangle \frac{V_{\ell_{k} \overline{\ell}_{k}}}
{\left|V_{\ell_{k} \overline{\ell}_{k}}\right|},
\end{equation}
where $\overline{\ell}_{k}=1-\ell_{k}$ is the surface index of the other potential energy surface and $\langle\ell|A|\ell^{\prime}\rangle$ is the corresponding element of the matrix-form observable $\widehat{A}$
\begin{equation*}
    \langle\ell|A(q)|\ell^{\prime}\rangle=A_{\ell\ell^{\prime}}(q)\quad\forall \ell,\ell^{\prime}\in \{0,1\}.
\end{equation*}
Different from the conventional ring polymer representation, each bead of extended ring polymer representation is associated with a surface index $\ell_{k}$ to show which energy surface it lies in. When $\ell_{k} \neq \ell_{k+1}$, we call it a kink in the extended ring polymer representation. It is easy to notice that when only two electronic states are involved the kink number is always an even number smaller than $N$ in a configuration. Readers can refer to Section \RNum{2}A of \cite{lu2017path} for more discussions about the extended ring polymer representation for the thermal average.

\subsection{A brief introduction to PIMD-SH method}
\hspace{4mm} 
To calculate the thermal average, from the extended ring polymer representation, one can reformulate the ratio of \eqref{num} to \eqref{den} as one expectation as in \cite{lu2017path,lu2018accelerated}. Then the extended ring polymer representation $I$ for thermal average $\langle\widehat{A}\rangle$ can be rewritten as
\begin{equation*}
    \langle\widehat{A}\rangle \approx I= \int_{\mathbb{R}^{2d N}} \mathrm{d} \bm{q} \mathrm{d} \bm{p} \sum_{\bm{\ell} \in\{0,1\}^{N}} \pi(\tilde{\bm{z}}) W_{N}[A](\tilde{\bm{z}}),
\end{equation*}
where $\bm{z}$ and $\tilde{\bm{z}}$ respectively denote $(\bm{q},\bm{p})$ in the position and momentum space $\mathbb{R}^{2d N}$ and $(\bm{q},\bm{p},\bm{\ell})$ in the extended ring polymer configuration space $S=\mathbb{R}^{2d N}\times \{0,1\}^{N}$. And $\pi (\tilde{\bm{z}})$ is a distribution on extended configuration space $S$ taking the form as 
\begin{equation*}
    \pi(\tilde{\bm{z}})=\frac{1}{Z_{N}} e^{-\beta_{N} H_{N}(\tilde{\bm{z}})}.
\end{equation*}
\par  The PIMD-SH method proposed a sampling scheme $\tilde{\bm{z}}(t)$, a stochastic differential whose trajectory is ergodic with respect to equilibrium distribution $\pi$, then the integral of $W_{N}[A]$ with respect to the distribution $\pi$ can be approximated by sampling according to the trajectory of $\tilde{\bm{z}}(t)$:
\begin{equation*}
    \int_{\mathbb{R}^{2d N}} \mathrm{d} \bm{q} \mathrm{d} \bm{p} \sum_{\bm{\ell} \in\{0,1\}^{N}} \pi(\tilde{\bm{z}}) W_{N}[A](\tilde{\bm{z}})\approx
    \lim _{T \rightarrow \infty} \frac{1}{T} \int_{0}^{T} W_{N}[A](\tilde{\bm{z}}(t)) \mathrm{d} t.
\end{equation*}
The trajectory $\tilde{\bm{z}}(t)$ is constructed as following:
\begin{equation}\label{PIMD-SH}
    \left\{\begin{array}{l}{\mathrm{d} \bm{q}=\nabla_{\bm{p}} H_{N}(\bm{q}(t), \bm{p}(t), \bm{\ell}(t)) \mathrm{d} t}, \\ {\mathrm{d} \bm{p}=-\nabla_{\bm{q}} H_{N}(\bm{q}(t), \bm{p}(t), \bm{\ell}(t)) \mathrm{d} t-\gamma \bm{p} \mathrm{d} t+\sqrt{2 \gamma \beta_{N}^{-1} M} \mathrm{d} \bm{B}}, \\ {P\left(\bm{\ell}(t+\delta t)=\bm{\ell}^{\prime} | \bm{\ell}(t)=\bm{\ell}, \bm{z}(t)=\bm{z}\right)=\delta_{\bm{\ell}^{\prime}, \bm{\ell}}+\eta \lambda_{\bm{\ell}^{\prime}, \bm{\ell}}(\bm{z}) \delta t+o(\delta t),}\end{array}\right.
\end{equation}
where $\mathrm{d} \bm{B}$ is Brownian motion of dimension $d N$, $\gamma \geq 0$ denotes the friction constant, $\eta >0$ serves as an overall scaling parameter for the hopping intensity and $\delta t \ll 1$ denotes the infinitesimal time interval. Notice the last line of \eqref{PIMD-SH} is established in the sense of the limitation $\delta t\rightarrow 0$. The coefficients $\lambda_{\bm{\ell}^{\prime},\bm{\ell}}$ are defined as
\begin{equation*}
    \lambda_{\bm{\ell}^{\prime}, \bm{\ell}}=\left\{\begin{array}{ll}{-\sum_{\tilde{\bm{\ell}} \in S_{\bm{\ell}}} p_{\tilde{\bm{\ell}}, \bm{\ell}}(\bm{z})} & {\bm{\ell}^{\prime}=\bm{\ell}} \\ {p_{\bm{\ell}^{\prime}, \bm{\ell}}} & {\bm{\ell}^{\prime} \in S_{\bm{\ell}}} \\ {0} & {\text {otherwise }}\end{array}\right.
\end{equation*}
where
\begin{equation*}
   S_{\bm{\ell}}=\left\{\bm{\ell}^{\prime} |\left\|\bm{\ell}^{\prime}-\bm{\ell}\right\|_{1}=1  \text { or } \bm{\ell}^{\prime}=\bm{1}-\bm{\ell}\right\} \,( ||\bm{\ell}^{\prime}-\bm{\ell}||_{1}=\sum_{k=1}^{N}|\ell^{\prime}_{k}-\ell_{k}|)\quad\text{and}\quad
    p_{\bm{\ell}^{\prime}, \bm{\ell}}=e^{\frac{\beta_{N}}{2}\left(H_{N}(\bm{z}, \bm{\ell})-H_{N}\left(\bm{z}, \bm{\ell}^{\prime}\right)\right)} .
\end{equation*}
The evolution of $\bm{\ell}(t)$ is a Markov jump process following a surface hopping type dynamics. Readers can refer to the work of the fewest switches surface hopping in \cite{tully1990molecular} (and other recent works \cite{lu2016improved,lu2018frozen}). And the choice of $p_{\bm{\ell}^{\prime},\bm{\ell}}$ satisfies the detailed balance condition in order to preserve the distribution $\pi$ under the dynamics. The choice of $S_{\bm{\ell}}$ only allows two types of changes in the surface index sequence $\bm{\ell}$: first, only one bead flips to the other energy surface; second, all beads in the sequence flip to contrary energy surface. Although the choice of $S_{\bm{\ell}}$ is for simplicity, it can guarantee that the jump process $\bm{\ell}(t)$ can reach any surface index configuration.
It has been proved in Section \RNum{2}B of \cite{lu2017path} that the trajectory $\tilde{\bm{z}}(t)$ has the ergodic property to sample the distribution $\pi$ on the extended configuration space $S$. Because it shows the distribution $\pi$ is a stationary solution to the Fokker-Planck equation of the process $\tilde{\bm{z}}(t)$. Readers can refer to Section \RNum{2}B of \cite{lu2017path} for more discussion about PIMD-SH method.

\section{The truncated thermal averages}\label{sec-3}
\hspace{4mm} 



Our motivation to improve the sampling of the thermal average calculation comes from leveraging the unique structure of the extended ring polymer representation. To demonstrate our insight in a heuristic way, we introduce some notations below. Let 
$$T(\bm{\ell})=\int_{\mathbb{R}^{2dN}}e^{-\beta_{N}H_{N}(\bm{q},\bm{p},\bm{\ell})}\mathrm{d}\bm{q}\mathrm{d}\bm{p} \quad\text{and}\quad
T_{A}(\bm{\ell})= \int_{\mathbb{R}^{2dN}} W_{N}[A](\bm{q},\bm{p},\bm{\ell})e^{-\beta_{N}H_{N}(\bm{q},\bm{p},\bm{\ell})}\mathrm{d}\bm{q}\mathrm{d}\bm{p}.
$$
With some simple calculation, $I$ as in \eqref{I} can be rewritten as
\begin{equation}\label{I_2}
I= \frac{\sum\limits_{k=0}^{\lfloor\frac{N}{2}\rfloor}\sum\limits_{|\bm{\ell}|=2k}T_{A}(\bm{\ell})}{\sum\limits_{k=0}^{\lfloor\frac{N}{2}\rfloor}\sum\limits_{|\bm{\ell}|=2k}T(\bm{\ell})},
\end{equation}
where $|\bm{\ell}|$ denotes the kink number of surface index sequence $\bm{\ell}\in \{0,1\}^{N}$.

\par The distribution $\pi$ can be rewritten as
\begin{equation}\label{pi}
    \pi(\tilde{\bm{z}})=\frac{1}{Z_{N}}e^{-\frac{\beta_{N}}{2 M} \sum_{k=1}^{N} p_{k}^{2}-\frac{M}{2 \beta_{N}} \sum_{k=1}^{N}\left(q_{k}-q_{k+1}\right)^{2}-\beta_{N} \sum_{k=1}^{N} V\left(q_{k}, \ell_{k}, \ell_{k+1}\right)} \prod\limits_{k=1}^{N} F\left(q_{k}, \ell_{k}, \ell_{k+1}\right)
\end{equation}
according to \eqref{H} and \eqref{h}, where
\begin{equation*}
    V\left(q_{k}, \ell_{k}, \ell_{k+1}\right)=
    \begin{cases}
    V_{\ell_{k}\ell_{k}}(q_{k})&\ell_{k}=\ell_{k+1}\\
    \frac{V_{00}(q_{k})+V_{11}(q_{k})}{2}&\ell_{k}\neq \ell_{k+1}
    \end{cases} 
    \, \text{and} \,
    F(q_{k},\ell_{k},\ell_{k+1})=
    \begin{cases}
    \cosh(\beta_{N}|V_{01}(q_{k})|)&\ell_{k}=\ell_{k+1}\\
    \sinh(\beta_{N}|V_{01}(q_{k})|)&\ell_{k}\neq \ell_{k+1}
    \end{cases}.
\end{equation*}
According to the special form of $F(q,\ell,\ell^{\prime})$, for large $N$ and small $|V_{01}(q)|$, we have
\begin{equation*}
    F(q,\ell,\ell^{\prime})=
    \begin{cases}
    \cosh(\beta_{N}|V_{01}(q)|) \approx 1 & \ell=\ell^{\prime}\\
    \sinh(\beta_{N}|V_{01}(q)|) \approx \beta_{N}|V_{01}(q)|\approx \frac{C}{N} & \ell\neq \ell^{\prime}
    \end{cases}.
\end{equation*}
Thus while the kink number of $\tilde{\bm{z}}$ increases, the value $\pi(\tilde{\bm{z}})$ decreases exponentially, which means the integral $T(\bm{\ell})$ is negligible when $\bm{\ell}$ has a large number of kinks.
With the same idea, the integral $T_{A}(\bm{\ell})$ can be exponentially small when $\bm{\ell}$ contains many kinks and $W_{N}[A]$ is bounded. Detailed analysis will be shown in the proof of Theorem \ref{thm-1}. Enlightened by this observation, the extended ring polymer representation $I$ can be approximated by the following
\begin{equation}\label{I_tru}
    I\approx I_{2k_{0}}:=\frac{\sum\limits_{k=0}^{k_{0}}\sum\limits_{|\bm{\ell}|=2k}T_{A}(\bm{\ell})}{\sum\limits_{k=0}^{k_{0}}\sum\limits_{|\bm{\ell}|=2k}T(\bm{\ell})},
\end{equation}
and we name $I_{2k_{0}}$ the truncated thermal average.

\par We shall show in Theorem \ref{thm-1}  the truncated thermal average $I_{2k_{0}}$ can approximate the extended ring polymer representation $I$ when $k_{0}$ is properly chosen. First, we have the following lemma counting the number of configurations given the number of kinks.
\begin{lemma}\label{lem-1}
For integer $N$ and $k\,(0\leq k\leq \lfloor\frac{N}{2}\rfloor)$, $\{0,1\}^N$ contains $2\tbinom{N}{2k}$ different surface index sequences which have $2k$ kinks. 
\end{lemma}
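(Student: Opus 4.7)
The plan is to set up an explicit bijection between the set of binary sequences $\bm{\ell} \in \{0,1\}^N$ with exactly $2k$ kinks and the product set $\binom{[N]}{2k} \times \{0,1\}$, where $[N] := \{1,\dots,N\}$, and then read off the cardinality.

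First, I would formalize the notion of a kink by identifying each index $i \in [N]$ with the cyclic edge between beads $i$ and $i+1$ (with $\ell_{N+1} = \ell_1$), and call $i$ a kink location of $\bm{\ell}$ when $\ell_i \neq \ell_{i+1}$. Given this, define a map $\Phi$ from surface index sequences to $(K, \ell_1)$ pairs by sending $\bm{\ell}$ to its kink-location set $K(\bm{\ell}) \subseteq [N]$ together with its first entry $\ell_1$.

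Next, I would construct the inverse map $\Psi$. Given any subset $K \subseteq [N]$ and any starting value $\ell_1 \in \{0,1\}$, define $\ell_{i+1}$ recursively from $\ell_i$ by flipping if $i \in K$ and keeping the value otherwise. Walking all the way around the cycle produces a well-defined sequence $\bm{\ell}$ whose kink set equals $K$, provided the closure condition $\ell_{N+1} = \ell_1$ is met; this closure holds exactly when the total number of flips $|K|$ is even. Since we restrict to $|K| = 2k$, which is even by assumption, every choice of $(K, \ell_1)$ with $|K| = 2k$ yields a legitimate sequence. It is immediate from the construction that $\Phi$ and $\Psi$ are mutually inverse on the relevant subsets.

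Finally, counting gives $\binom{N}{2k}$ choices for $K$ and $2$ choices for $\ell_1$, hence $2\binom{N}{2k}$ sequences with exactly $2k$ kinks. There is no serious obstacle here; the only point requiring a moment of care is the cyclic closure, which is precisely the reason the statement restricts to even kink numbers $2k$ (the constraint $0 \le k \le \lfloor N/2 \rfloor$ ensures $2k \le N$ so that $\binom{N}{2k}$ is well-defined and nonzero).
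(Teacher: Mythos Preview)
Your proof is correct and follows essentially the same approach as the paper: determine a sequence uniquely by its first entry and the set of kink locations among the $N$ cyclic intervals, then count. You are more explicit than the paper about the inverse map and the cyclic closure condition (which forces the kink count to be even), but the underlying idea is identical.
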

\begin{proof}
When the beads number is $N$, we can determine the surface index sequence uniquely after we know the first number is $0$ or $1$ and where the kinks happen. Since there are $N$ intervals for kinks to happen (the kinks occur between $\ell_{k}$ and $\ell_{k+1}$ ($1\leq k\leq N$)), the total number of $2k$-kink sequences are $2\tbinom{N}{2k}$. We complete the proof. $\hfill\square$
\end{proof}

\par Thus we are ready to state the main result of this section.
\begin{theorem}\label{thm-1}
Consider the thermal average in the ring polymer representation as in  \eqref{I}. Suppose the
the diagonal potentials satisfy: $V_{00}(q)-V_{11}(q)\le C_{1}$, the off-diagonal potential is bounded: $0<V_{01}(q)<C_{2}$ and the observable $\widehat{A}$ is bounded in each element: $|\langle \ell|A(q)| \ell^{\prime}\rangle| <C_{3},\,\forall \ell,\ell^{\prime}\in \{0,1\}$, where $C_{1}$, $C_{2}$ and $C_{3}$ are some generic constants. Then we have 
\begin{equation*}
\left|I-I_{2 k_{0}}\right| \leq C_{4} C_{5}^{N}N\sum\limits
_{k=k_{0}+1}^{\left\lfloor\frac{N}{2}\right\rfloor} \frac{1}{(2 k) !}
\end{equation*}
for some constants $C_{4}$ and $C_{5}$ both independent of $N$ and $k_{0}$. 
\end{theorem}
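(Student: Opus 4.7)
My plan is to reduce $|I - I_{2k_0}|$ to tail estimates on the kink-indexed sums defining $I$ in \eqref{I_2}. Setting $R_A := \sum_{k > k_0} \sum_{|\bm{\ell}|=2k} T_A(\bm{\ell})$ and $R := \sum_{k > k_0} \sum_{|\bm{\ell}|=2k} T(\bm{\ell})$, a direct algebraic manipulation of \eqref{I_2} and \eqref{I_tru} gives
\[
I - I_{2k_0} \;=\; \frac{R_A \;-\; I_{2k_0}\, R}{\sum_{\bm{\ell}} T(\bm{\ell})}.
\]
It then suffices to (i) upper bound $|R_A|$ and $|R|$, (ii) lower bound the full denominator using positivity, and (iii) obtain an a priori bound on $|I_{2k_0}|$.

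The bound on $|R|$ exploits the factorization in \eqref{pi}: for $\bm{\ell}$ with $|\bm{\ell}|=2k$, the product $\prod_{j=1}^N F(q_j,\ell_j,\ell_{j+1})$ contains exactly $2k$ sinh factors and $N-2k$ cosh factors. The elementary inequalities $\sinh(\beta_N|V_{01}|) \le \beta_N C_2 \cosh(\beta_N C_2)$ and $\cosh(\beta_N C_2)^N \le e^{\beta C_2}$, together with the lower bound $V(q,\ell,\ell') \ge V_{00}(q) - C_1$ implied by $V_{00}-V_{11}\le C_1$, yield a pointwise estimate of the form $T(\bm{\ell}) \le (\beta C_2/N)^{2k}\, e^{\beta(C_1+C_2)}\, T(\bm{0})$. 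Lemma \ref{lem-1} counts $2\binom{N}{2k}$ such sequences, and the standard inequality $\binom{N}{2k} \le N^{2k}/(2k)!$ absorbs the $N^{2k}$ factor; after summing over $k > k_0$ and using $(\beta C_2)^{2k} \le \max(1,\beta C_2)^N$, the bound on $|R|$ takes the form $C\, C_5^N\, T(\bm 0) \sum_{k>k_0} 1/(2k)!$.

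The analogous estimate for $|R_A|$ is the main technical obstacle. Decomposing $W_N[A] = \frac{1}{N}\sum_{k=1}^N w_k$, I would bound each integral $\int w_k\, e^{-\beta_N H_N}\, d\bm{q}\, d\bm{p}$ position by position. For a non-kink position $k$ (i.e.\ $\ell_k=\ell_{k+1}$), the exponential factor inside $w_k$ simplifies to $\tanh(\beta_N|V_{01}|)\cdot e^{\pm\beta_N(V_{00}-V_{11})/2}$, which is uniformly bounded, so $|w_k|\le C_3'$ and the integral is at most a constant times $T(\bm{\ell})$. For a kink position $k$, the same factor becomes $\coth(\beta_N|V_{01}|)\cdot e^{\pm\beta_N(V_{00}-V_{11})/2}$, which grows like $N/|V_{01}|$ and appears to diverge; the crucial observation is that in the full integrand it is multiplied by the matching $\sinh(\beta_N|V_{01}(q_k)|)$ factor from position $k$ in $\prod_j F$, and the product $\coth\cdot\sinh = \cosh$ is bounded. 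This cancellation is precisely what makes the truncation effective despite the kink enhancement of off-diagonal observables, and it produces $|T_A(\bm{\ell})| \le C(1+k)\, (\beta C_2/N)^{2k}\, e^{\beta(C_1+C_2)}\, T(\bm 0)$; the prefactor $1+k\le N$ accounts for the loss of one $\sinh$ factor at each kink position where the cancellation is used.

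To close the argument I would use $\sum_{\bm{\ell}}T(\bm{\ell})\ge T(\bm{0})>0$ for the denominator, together with the same cancellation mechanism applied to the truncated ratio to show $|I_{2k_0}|$ is $O(N)$. Substituting all pieces into the ratio decomposition and collecting the $T(\bm 0)$ factors yields $|I-I_{2k_0}|\le C_4\, C_5^N\, N \sum_{k=k_0+1}^{\lfloor N/2\rfloor} 1/(2k)!$, with $C_4$ and $C_5$ absorbing the constants arising from $e^{\beta(C_1+C_2)}$, $\cosh(\beta_N C_2)^N$, and $\max(1,\beta C_2)^N$. The hard part is isolating the $\coth\cdot\sinh=\cosh$ cancellation in the kink term of $W_N[A]$; once this is identified, the remaining steps are careful but standard combinatorial and integral estimates.
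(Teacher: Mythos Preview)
Your proposal is correct and follows essentially the same strategy as the paper's proof. The core mechanism you identify—the $\coth\cdot\sinh=\cosh$ cancellation at kink positions—is exactly what the paper exploits, though the paper packages it differently: it absorbs the exponential factor $e^{\beta_N\langle\ell_k|G_k|\ell_{k+1}\rangle - \beta_N\langle\bar\ell_k|G_k|\ell_{k+1}\rangle}$ from $W_N[A]$ into $e^{-\beta_N H_N}$ to obtain a modified Boltzmann weight $T_k(\bm\ell)$ with one flipped index, whose kink count is $2k\pm1$; the case $\Delta=-1$ is precisely your cancellation, and the paper bounds both cases uniformly by inserting the crude factor $N=\beta_N^{-1}$.

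There are a few minor packaging differences worth noting. Your algebraic identity $I-I_{2k_0}=(R_A - I_{2k_0}R)/\sum_{\bm\ell}T(\bm\ell)$ is cleaner than the paper's triangle-inequality split $|I-I_{2k_0}|\le I_A + I_B I_C$, and your bound $|I_{2k_0}|=O(N)$ (obtained by summing $\sum_k (\beta C_2)^{2k}/(2k)!\le\cosh(\beta C_2)$) is sharper than the paper's $I_B=O(N C_5^N)$, though the final result is the same once multiplied by $|R|$. The paper introduces an auxiliary integral $C(\bm\ell)$ (the Boltzmann weight with the $F$-factors stripped) and compares everything to $C(\bm\ell_0)$, whereas you work directly with the pointwise ratio of integrands and compare to $T(\bm 0)$; your route is slightly more direct since $\tanh(\beta_N|V_{01}|)\le\beta_N C_2$ already gives the needed decay without separating the $\cosh$ factors. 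Your prefactor $(1+k)$ in $|T_A(\bm\ell)|$ is a genuine refinement over the paper's uniform $N$, reflecting that only the $2k$ kink positions lose a $\sinh$ factor, but you then discard it via $1+k\le N$ so the end result matches.
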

\begin{proof}
To simplify our calculation below, let $\beta=1$ or equivalently, $\beta_{N}=\frac{1}{N}$. Let $C_{1}>0$ and $C_{2}>1$, because if these conditions are not satisfied, we can replace $C_{1}$ and $C_{2}$ respectively by $|C_{1}|$ and $C_{2}\vee 1$ and the proof below also works. Define
	\begin{equation*}
	T_{A}(\bm{\ell})=\int_{\mathbb{R}^{2dN}}W_{N}[A](\bm{q},\bm{p},\bm{\ell})e^{-\beta_{N}H_{N}(\bm{q},\bm{p},\bm{\ell})}\mathrm{d}\bm{q}\mathrm{d}\bm{p}
	\quad\text{and}\quad
	T(\bm{\ell})=\int_{\mathbb{R}^{2dN}}e^{-\beta_{N}H_{N}(\bm{q},\bm{p},\bm{\ell})}\mathrm{d}\bm{q}\mathrm{d}\bm{p}.
	\end{equation*}
	According to the definition of $W_{N}[A]$ in \eqref{W}, 
	\begin{align*}
	T_{A}(\bm{\ell})&=\int_{\mathbb{R}^{2dN}}W_{N}[A](\bm{q},\bm{p},\bm{\ell})e^{-\beta_{N}H_{N}(\bm{q},\bm{p},\bm{\ell})}\mathrm{d}\bm{q}\mathrm{d}\bm{p}\\
	&=\int_{\mathbb{R}^{2dN}}\left(\frac{1}{N} \sum_{k=1}^{N}\left\langle \ell_{k}\left|A\left(q_{k}\right)\right| \ell_{k}\right\rangle\right)e^{-\beta_{N}H_{N}(\bm{q},\bm{p},\bm{\ell})}\mathrm{d}\bm{q}\mathrm{d}\bm{p}-\\
	&-\frac{1}{N}\sum\limits_{k=1}^{k=N}\int_{\mathbb{R}^{2dN}}e^{\beta_{N}\left\langle \ell_{k}\left|G_{k}\right| \ell_{k+1}\right\rangle-\beta_{N}\left\langle\bar{\ell}_{k}\left|G_{k}\right| \ell_{k+1}\right\rangle}\left\langle \ell_{k}\left|A\left(q_{k}\right)\right| \bar{\ell}_{k}\right\rangle e^{-\beta_{N}H_{N}(\bm{q},\bm{p},\bm{\ell})}\mathrm{d}\bm{q}\mathrm{d}\bm{p}\\
	&=\int_{\mathbb{R}^{2dN}}\left(\frac{1}{N} \sum_{k=1}^{N}\left\langle \ell_{k}\left|A\left(q_{k}\right)\right| \ell_{k}\right\rangle\right)e^{-\beta_{N}H_{N}(\bm{q},\bm{p},\bm{\ell})}\mathrm{d}\bm{q}\mathrm{d}\bm{p}-\\
	&-\frac{1}{N}\sum\limits_{k=1}^{N}\int_{\mathbb{R}^{2dN}}\left\langle \ell_{k}\left|A\left(q_{k}\right)\right| \bar{\ell}_{k}\right\rangle
	e^{-\beta_{N}\left\langle\bar{\ell}_{k}\left|G_{k}\right| \ell_{k+1}\right\rangle-\beta_{N}\sum\limits_{1\le j\le N,j\neq k}\left\langle\bar{\ell}_{j}\left|G_{k}\right| \ell_{j+1}\right\rangle}\mathrm{d}\bm{q}\mathrm{d}\bm{p}.
	\end{align*}
	Define
	\begin{equation*}
	T_{B}(\bm{\ell})=\int_{\mathbb{R}^{2dN}}\left(\frac{1}{N} \sum_{k=1}^{N}\left\langle \ell_{k}\left|A\left(q_{k}\right)\right| \ell_{k}\right\rangle\right)e^{-\beta_{N}H_{N}(\bm{q},\bm{p},\bm{\ell})}\mathrm{d}\bm{q}\mathrm{d}\bm{p},
	\end{equation*}
	\begin{equation*}
	T_{C,k}(\bm{\ell})=\int_{\mathbb{R}^{2dN}}\left\langle \ell_{k}\left|A\left(q_{k}\right)\right| \bar{\ell}_{k}\right\rangle
	e^{-\beta_{N}\left\langle\bar{\ell}_{k}\left|G_{k}\right| \ell_{k+1}\right\rangle-\beta_{N}\sum\limits_{1\le j\le N,j\neq k}\left\langle\bar{\ell}_{j}\left|G_{k}\right| \ell_{j+1}\right\rangle}\mathrm{d}\bm{q}\mathrm{d}\bm{p}\quad\text{and}
	\end{equation*}
	\begin{equation*}
	T_{k}(\bm{\ell})=\int_{\mathbb{R}^{2dN}}
	e^{-\beta_{N}\left\langle\bar{\ell}_{k}\left|G_{k}\right| \ell_{k+1}\right\rangle-\beta_{N}\sum\limits_{1\le j\le N,j\neq k}\left\langle\bar{\ell}_{j}\left|G_{k}\right| \ell_{j+1}\right\rangle}\mathrm{d}\bm{q}\mathrm{d}\bm{p}.
	\end{equation*}
	We can write $T_{A}(\bm{\ell})$ to
	\begin{align}\label{p-1}
	T_{A}(\bm{\ell})=T_{B}(\bm{\ell})-\frac{1}{N}\sum\limits_{k=1}^{N}T_{C,k}(\bm{\ell}).
	\end{align}
	Notice $|\langle \ell|A(q)| \ell^{\prime}\rangle| <C_{3}\,(\forall \ell,\ell^{\prime}\in \{0,1\})$, thus
	\begin{equation}\label{p-2}
	\left|\frac{1}{N} \sum_{k=1}^{N}\left\langle \ell_{k}\left|A\left(q_{k}\right)\right| \ell_{k}\right\rangle\right| \le \frac{1}{N} \sum_{k=1}^{N}\left|\left\langle \ell_{k}\left|A\left(q_{k}\right)\right| \ell_{k}\right\rangle\right| \le C_{3}
    \quad\text{and}\quad
	|T_{B}(\bm{\ell})|\le C_{3}T(\bm{\ell}).
	\end{equation}
	By the same calculation, we have
	\begin{equation}\label{p-3}
	|T_{C,k}(\bm{\ell})|\le \int_{\mathbb{R}^{2dN}}\left|\left\langle \ell_{k}\left|A\left(q_{k}\right)\right| \bar{\ell}_{k}\right\rangle\right|
	e^{-\beta_{N}\left\langle\bar{\ell}_{k}\left|G_{k}\right| \ell_{k+1}\right\rangle-\beta_{N}\sum\limits_{1\le j\le N,j\neq k}\left\langle\bar{\ell}_{j}\left|G_{k}\right| \ell_{j+1}\right\rangle}\mathrm{d}\bm{q}\mathrm{d}\bm{p}
	\le C_{3}T_{k}(\bm{\ell}).
	\end{equation}
	We assume $|\bm{\ell}|=2k_{0}$ and let the $2k_{0}$ kinks of $\bm{\ell}$ happen after surface indexes $\ell_{i_{1}}, \ldots, \ell_{i_{2 k_{0}}}\,(1 \leq i_{1}<\cdots<i_{2k_{0}}\le N)$. Let $M=\cosh(C_{2}\beta_{N})$, we have
	\begin{equation*}
	\frac{\sinh(\beta_{N}|V_{01}(q_{k})|)}{\beta_{N}|V_{01}(q_{k})|}=
	\cosh (\theta\beta_{N}|V_{01}(q_{k})|)\leq \cosh (C_{2}\beta_{N})=M,\quad (0\leq \theta\leq 1)
	\end{equation*}
	and thus 
	\begin{equation}\label{p-4}
	\sinh(\beta_{N}|V_{01}(q_{k})|)\le M\beta_{N}|V_{01}(q_{k})|\le MC_{2}\beta_{N}.
	\end{equation}
	According to \eqref{p-4}, we have
	\begin{align}\label{p-5}
	T(\bm{\ell})&= \int_{R^{2d N}}e^{-\frac{\beta_{N}}{2M}\sum\limits_{k=1}^{N}p_{k}^2-\frac{M}{2\beta_{N}}\sum\limits_{k=1}^{N}(q_{k}-q_{k+1})^{2}-\beta_{N}\sum\limits_{k=1}^{N}V(q_{k},\ell_{k},\ell_{k+1})}\prod\limits_{k=1}^{2k_{0}}\sinh(\beta_{N}|V_{01}(q_{i_{k}})|)\times \notag \\
	&\times\prod\limits_{k\neq i_{1},\cdots,i_{2k_{0}}}\cosh(\beta_{N}|V_{01}(q_{k})|)\mathrm{d}\bm{q}\mathrm{d}\bm{p}
	\notag \\
	&\leq \int_{R^{2d N}}e^{-\frac{\beta_{N}}{2M}\sum\limits_{k=1}^{N}p_{k}^2-\frac{M}{2\beta_{N}}\sum\limits_{k=1}^{N}(q_{k}-q_{k+1})^{2}-\beta_{N}\sum\limits_{k=1}^{N}V(q_{k},\ell_{k},\ell_{k+1})}\prod\limits_{k=1}^{2k_{0}}M\beta_{N}|V_{01}(q_{i_{k}})|\times M^{N-2k_{0}}\mathrm{d}\bm{q}\mathrm{d}\bm{p} \notag \\
	&\leq M^N \beta_{N}^{2k_{0}}C_{2}^{2k_{0}}C(\bm{\ell})\le M^{N}C_{2}^N\beta_{N}^{2k_{0}}C(\bm{\ell}),
	\end{align}
	where $C(\bm{\ell})=\int_{R^{2d N}}e^{-\frac{\beta_{N}}{2M}\sum\limits_{k=1}^{N}p_{k}^2-\frac{M}{2\beta_{N}}\sum\limits_{k=1}^{N}(q_{k}-q_{k+1})^{2}-\beta_{N}\sum\limits_{k=1}^{N}V(q_{k},\ell_{k},\ell_{k+1})}\mathrm{d}\bm{q}\mathrm{d}\bm{p}.$\\
    For $T_{n}(\bm{\ell})$, we also have
	\begin{align}
	T_{n}(\bm{\ell})&=\int_{R^{2d N}}e^{-\frac{\beta_{N}}{2M}\sum\limits_{k=1}^{N}p_{k}^2-\frac{M}{2\beta_{N}}\sum\limits_{k=1}^{N}(q_{k}-q_{k+1})^{2}-\beta_{N}V(q_{n},\bar{\ell}_{n},\ell_{n+1})-\beta_{N}\sum\limits_{k\neq n,1\le k\le N}V(q_{k},\ell_{k},\ell_{k+1})}\times \notag \\
	&\times F(q_{n},\bar{\ell}_{n},\ell_{n+1}) \prod\limits_{k\neq n,1\le k\le N} F\left(q_{k}, \ell_{k}, \ell_{k+1}\right)
	\mathrm{d}\bm{q}\mathrm{d}\bm{p}.
	\notag
	\end{align}
    We observe that when $\bm{\ell}$ contains $2k_{0}$ kinks, $\prod\limits_{k=1}^{N} F\left(q_{k}, \ell_{k}, \ell_{k+1}\right)$ contains $2k_{0}$ $\sinh$ terms, each of which is smaller than $MC_{2}\beta_{N}$, and $(N-2k_{0})$ $\cosh$ terms, each of which is samller than $M$. Compared with $\prod\limits_{k=1}^{N} F\left(q_{k}, \ell_{k}, \ell_{k+1}\right)$, $F(q_{n},\bar{\ell}_{n},\ell_{n+1}) \prod\limits_{k\neq n,1\le k\le N} F\left(q_{k}, \ell_{k}, \ell_{k+1}\right)$ contains $(2k_{0}+\Delta)$ $\sinh$ terms and $(N-2k_{0}-\Delta)$ $\cosh$ terms, where $\Delta\in \{-1,1\}$ dependent on the choice of $\ell_{n}$ and $\ell_{n+1}$, as a result,
	\begin{align}\label{p-6}
	T_{n}(\bm{\ell})&=\int_{R^{2d N}}e^{-\frac{\beta_{N}}{2M}\sum\limits_{k=1}^{N}p_{k}^2-\frac{M}{2\beta_{N}}\sum\limits_{k=1}^{N}(q_{k}-q_{k+1})^{2}-\beta_{N}V(q_{n},\bar{\ell}_{n},\ell_{n+1})-\beta_{N}\sum\limits_{k\neq n,1\le k\le N}V(q_{k},\ell_{k},\ell_{k+1})}\times \notag \\
	&\times F(q_{n},\bar{\ell}_{n},\ell_{n+1}) \prod\limits_{k\neq n,1\le k\le N} F\left(q_{k}, \ell_{k}, \ell_{k+1}\right)
	\mathrm{d}\bm{q}\mathrm{d}\bm{p} \notag\\
	&\le \int_{R^{2d N}}e^{-\frac{\beta_{N}}{2M}\sum\limits_{k=1}^{N}p_{k}^2-\frac{M}{2\beta_{N}}\sum\limits_{k=1}^{N}(q_{k}-q_{k+1})^{2}-\beta_{N}V(q_{n},\bar{\ell}_{n},\ell_{n+1})-\beta_{N}\sum\limits_{k\neq n,1\le k\le N}V(q_{k},\ell_{k},\ell_{k+1})}\mathrm{d}\bm{q}\mathrm{d}\bm{p}\times\notag\\
	&\times (MC_{2}\beta_{N})^{2k_{0}+\Delta}M^{N-2k_{0}-\Delta}\notag\\
	&= M^{N}C_{2}^{2k_{0}+\Delta}\beta_{N}^{2k_{0}+\Delta}C_{n}(\bm{\ell})\le M^{N}C_{2}^{N}N\beta_{N}^{2k_{0}}C_{n}(\bm{\ell}),
	\end{align}
	where $C_{n}(\bm{\ell})=\int_{R^{2d N}}e^{-\frac{\beta_{N}}{2M}\sum\limits_{k=1}^{N}p_{k}^2-\frac{M}{2\beta_{N}}\sum\limits_{k=1}^{N}(q_{k}-q_{k+1})^{2}-\beta_{N}V(q_{n},\bar{\ell}_{n},\ell_{n+1})-\beta_{N}\sum\limits_{k\neq n,1\le k\le N}V(q_{k},\ell_{k},\ell_{k+1})}\mathrm{d}\bm{q}\mathrm{d}\bm{p}.$\\
	Let $\bm{\ell}_{0}=\{0,\cdots,0\}$, another useful observation is
	\begin{equation}\label{p-7}
	T(\bm{\ell}_{0})= \int_{R^{2d N}}e^{-\frac{\beta_{N}}{2M}\sum\limits_{k=1}^{N}p_{k}^2-\frac{M}{2\beta_{N}}\sum\limits_{k=1}^{N}(q_{k}-q_{k+1})^{2}-\beta_{N}\sum\limits_{k=1}^{N}V(q_{k},0,0)}
	\prod\limits_{k=1}^{N}\cosh(\beta_{N}|V_{01}(q_{k})|)\mathrm{d}\bm{q}\mathrm{d}\bm{p}
	\ge C(\bm{\ell}_{0}),
	\end{equation}because $\cosh(\beta_{N}|V_{01}(q_{k})|)\ge 1$.
	When $\bm{\ell}$ contains $2k$ kinks, according to \eqref{p-2} and \eqref{p-5}, we have
	\begin{equation}\label{p-8}
	\left|T_{B}(\bm{\ell})\right| \leq C_{3} T(\bm{\ell}) \le C_{3}M^{N}C_{2}^{N} \beta_{N}^{2 k} C(\bm{\ell}).
	\end{equation}
	According to \eqref{p-3} and \eqref{p-6}, we have
	\begin{equation}\label{p-9}
	\left|T_{C, n}(\bm{\ell})\right|\le C_{3} T_{n}(\bm{\ell}) \le C_{3}M^{N}C_{2}^{N} N \beta_{N}^{2 k} C_{n}(\bm{\ell}).
	\end{equation}
	Under the assumption $V_{00}(q)-V_{11}(q)<C_{1}$, we can control $C(\bm{\ell})$ and $C_{n}(\bm{\ell})$ using $C(\bm{\ell}_{0})$ because
	\begin{align}
	\frac{C(\bm{\ell})}{C(\bm{\ell}_{0})}&= \frac{\int_{R^{d N}}e^{-\frac{M}{2\beta_{N}}\sum\limits_{k=1}^{N}(q_{k}-q_{k+1})^{2}-\beta_{N}\sum\limits_{k=1}^{N}V(q_{k},\ell_{k},\ell_{k+1})}\mathrm{d}\bm{q}}{\int_{R^{d N}}e^{-\frac{M}{2\beta_{N}}\sum\limits_{k=1}^{N}(q_{k}-q_{k+1})^{2}-\beta_{N}\sum\limits_{k=1}^{N}V_{00}(q_{k})}\mathrm{d}\bm{q}} \notag
	\\
	&=\frac{\int_{R^{d N}}e^{-\frac{M}{2\beta_{N}}\sum\limits_{k=1}^{N}(q_{k}-q_{k+1})^{2}-\beta_{N}\sum\limits_{k=1}^{N}V_{00}(q_{k})}e^{\beta_{N}\sum\limits_{k=1}^{N}\left(V_{00}(q_{k})-V(q_{k},\ell_{k},\ell_{k+1})\right)}\mathrm{d}\bm{q}}{\int_{R^{d N}}e^{-\frac{M}{2\beta_{N}}\sum\limits_{k=1}^{N}(q_{k}-q_{k+1})^{2}-\beta_{N}\sum\limits_{k=1}^{N}V_{00}(q_{k})}\mathrm{d}\bm{q}}, \notag
	\end{align}
	and
	\begin{equation}\label{p-10}
	V_{00}\left(q_{k}\right)-V\left(q_{k}, \ell_{k}, \ell_{k+1}\right)=\left\{\begin{array}{ll}
	0 \leq C_{1}, & \ell_{k}=\ell_{k+1}=0 \\
	V_{00}\left(q_{k}\right)-V_{11}\left(q_{k}\right) \leq C_{1}, & \ell_{k}=\ell_{k+1}=1 \\
	\frac{1}{2}\left(V_{00}\left(q_{k}\right)-V_{11}\left(q_{k}\right)\right) \leq C_{1}. & \ell_{k} \neq \ell_{k+1}
	\end{array}\right.
	\end{equation}
	Notice from \eqref{p-10}, $V_{00}\left(q_{k}\right)-V\left(q_{k}, \ell_{k}, \ell_{k+1}\right)$ can be bounded from above only if we assume $V_{00}(q)-V_{11}(q)$ is bounded from above because the right side of \eqref{p-10} only contains $V_{00}(q)-V_{11}(q)$ but not the opposite direction $V_{11}(q)-V_{00}(q)$. Thus $\beta_{N}\sum\limits_{k=1}^{N}\left(V_{00}(q_{k})-V(q_{k},\ell_{k},\ell_{k+1})\right)\le \beta_{N}\sum\limits_{k=1}^{N}C_{1}=\beta_{N}N C_{1}=C_{1}$ noticing $\beta_{N}=\frac{1}{N}$, from which we have $e^{\beta_{N}\sum\limits_{k=1}^{N}\left(V_{00}(q_{k})-V(q_{k},\ell_{k},\ell_{k+1})\right)}\le 
	e^{\beta_{N}\sum\limits_{k=1}^{N}C_{1}}=e^{C_{1}}$, and 
	\begin{equation}\label{p-11}
	\frac{C(\bm{\ell})}{C(\bm{\ell}_{0})}\le e^{C_{1}}.
	\end{equation}
	By the same analysis,
	\begin{align}
	\frac{C_{n}(\bm{\ell})}{C(\bm{\ell}_{0})}
	&=\frac{\int_{R^{d N}}e^{-\frac{M}{2\beta_{N}}\sum\limits_{k=1}^{N}(q_{k}-q_{k+1})^{2}-\beta_{N}V(q_{n},\bar{\ell}_{n},\ell_{n+1})-\beta_{N}\sum\limits_{k\neq n,1\le k\le N}V(q_{k},\ell_{k},\ell_{k+1})}\mathrm{d}\bm{q}}{\int_{R^{d N}}e^{-\frac{M}{2\beta_{N}}\sum\limits_{k=1}^{N}(q_{k}-q_{k+1})^{2}-\beta_{N}\sum\limits_{k=1}^{N}V_{00}(q_{k})}\mathrm{d}\bm{q}}\notag \\
	&=\frac{\int_{R^{d N}}e^{-\frac{M}{2\beta_{N}}\sum\limits_{k=1}^{N}(q_{k}-q_{k+1})^{2}-\beta_{N}\sum\limits_{k=1}^{N}V_{00}(q_{k})+\beta_{N}(V_{00}(q_{n})-V(q_{n},\bar{\ell}_{n},\ell_{n+1}))+\beta_{N}\sum\limits_{k\neq n,1\le k\le N}(V_{00}(q_{k})-V(q_{k},\ell_{k},\ell_{k+1}))}\mathrm{d}\bm{q}}{\int_{R^{d N}}e^{-\frac{M}{2\beta_{N}}\sum\limits_{k=1}^{N}(q_{k}-q_{k+1})^{2}-\beta_{N}\sum\limits_{k=1}^{N}V_{00}(q_{k})}\mathrm{d}\bm{q}}.\notag
	\end{align}
	According to \eqref{p-10}, we also have
	\begin{equation}\label{p-12}
	\frac{C_{n}(\bm{\ell})}{C(\bm{\ell}_{0})}\le e^{C_{1}}.
	\end{equation}
	With those estimates we can begin to bound $\left|I-I_{2 k_{0}}\right|$. According to the definition of $I$ and $I_{2k_{0}}$, we have 
	\begin{equation}\label{p-13}
	I=\frac{\sum\limits_{k=0}^{\left\lfloor\frac{N}{2}\right\rfloor} \sum\limits_{|\bm{\ell}|=2 k} T_{A}(\bm{\ell})}{\sum\limits_{k=0}^{\left\lfloor\frac{N}{2}\right\rfloor} \sum\limits_{|\bm{\ell}|=2 k} T(\bm{\ell})} \quad \text { and } \quad I_{2 k_{0}}=\frac{\sum\limits_{k=0}^{k_{0}} \sum\limits_{|\bm{\ell}|=2 k} T_{A}(\bm{\ell})}{\sum\limits_{k=0}^{k_{0}} \sum\limits_{|\bm{\ell}|=2 k} T(\bm{\ell})},
	\end{equation}
	and thus
	\begin{align}\label{p-14}
	| I-I_{2k_{0}} |
	&= \Bigg|\frac{\sum\limits_{k=0}^{\lfloor\frac{N}{2}\rfloor}\sum\limits_{|\bm{\ell}|=2k}T_{A}(\bm{\ell})}{\sum\limits_{k=0}^{\lfloor\frac{N}{2}\rfloor}\sum\limits_{|\bm{\ell}|=2k}T(\bm{\ell})}-\frac{\sum\limits_{k=0}^{k_{0}}\sum\limits_{|\bm{\ell}|=2k}T_{A}(\bm{\ell})}{\sum\limits_{k=0}^{k_{0}}\sum\limits_{|\bm{\ell}|=2k}T(\bm{\ell})}\Bigg| \notag \\
	&\leq \Bigg|\frac{\sum\limits_{k=0}^{\lfloor\frac{N}{2}\rfloor}\sum\limits_{|\bm{\ell}|=2k}T_{A}(\bm{\ell})}{\sum\limits_{k=0}^{\lfloor\frac{N}{2}\rfloor}\sum\limits_{|\bm{\ell}|=2k}T(\bm{\ell})}-\frac{\sum\limits_{k=0}^{k_{0}}\sum\limits_{|\bm{\ell}|=2k}T_{A}(\bm{\ell})}{\sum\limits_{k=0}^{\lfloor\frac{N}{2}\rfloor}\sum\limits_{|\bm{\ell}|=2k}T(\bm{\ell})}\Bigg|
	+\Bigg|\frac{\sum\limits_{k=0}^{k_{0}}\sum\limits_{|\bm{\ell}|=2k}T_{A}(\bm{\ell})}{\sum\limits_{k=0}^{\lfloor\frac{N}{2}\rfloor}\sum\limits_{|\bm{\ell}|=2k}T(\bm{\ell})}-\frac{\sum\limits_{k=0}^{k_{0}}\sum\limits_{|\bm{\ell}|=2k}T_{A}(\bm{\ell})}{\sum\limits_{k=0}^{k_{0}}\sum\limits_{|\bm{\ell}|=2k}T(\bm{\ell})}\Bigg|\notag \\
	&= \Bigg| \frac{\sum\limits_{k=k_{0}+1}^{\lfloor\frac{N}{2}\rfloor}\sum\limits_{|\bm{\ell}|=2k}T_{A}(\bm{\ell})}{\sum\limits_{k=0}^{\lfloor\frac{N}{2}\rfloor}\sum\limits_{|\bm{\ell}|=2k}T(\bm{\ell})}\Bigg|
	+\Bigg|\frac{\sum\limits_{k=0}^{k_{0}}\sum\limits_{|\bm{\ell}|=2k}T_{A}(\bm{\ell})}{\sum\limits_{k=0}^{\lfloor\frac{N}{2}\rfloor}\sum\limits_{|\bm{\ell}|=2k}T(\bm{\ell})}\Bigg| \Bigg|\frac{\sum\limits_{k=k_{0}+1}^{\lfloor\frac{N}{2}\rfloor}\sum\limits_{|\bm{\ell}|=2k}T(\bm{\ell})}{\sum\limits_{k=0}^{k_{0}}\sum\limits_{|\bm{\ell}|=2k}T(\bm{\ell})}\Bigg|\notag \\
	&= I_{A}+I_{B}I_{C},
	\end{align}
	where
	\begin{equation*}
	I_{A}=\Bigg| \frac{\sum\limits_{k=k_{0}+1}^{\lfloor\frac{N}{2}\rfloor}\sum\limits_{|\bm{\ell}|=2k}T_{A}(\bm{\ell})}{\sum\limits_{k=0}^{\lfloor\frac{N}{2}\rfloor}\sum\limits_{|\bm{\ell}|=2k}T(\bm{\ell})}\Bigg|,\quad I_{B}=\Bigg|\frac{\sum\limits_{k=0}^{k_{0}}\sum\limits_{|\bm{\ell}|=2k}T_{A}(\bm{\ell})}{\sum\limits_{k=0}^{\lfloor\frac{N}{2}\rfloor}\sum\limits_{|\bm{\ell}|=2k}T(\bm{\ell})}\Bigg|\quad \text{and}\quad
	I_{C}=\Bigg|\frac{\sum\limits_{k=k_{0}+1}^{\lfloor\frac{N}{2}\rfloor}\sum\limits_{|\bm{\ell}|=2k}T(\bm{\ell})}{\sum\limits_{k=0}^{k_{0}}\sum\limits_{|\bm{\ell}|=2k}T(\bm{\ell})}\Bigg|.
	\end{equation*}
	We notice when $\bm{\ell}$ contains $2k$ kinks, using \eqref{p-1}, \eqref{p-8} and \eqref{p-9} we have
	
	\begin{align}\label{p-15}
	|T_{A}(\bm{\ell})|
	&\le |T_{B}(\bm{\ell})|+\frac{1}{N} \sum_{n=1}^{N} |T_{C, n}(\bm{\ell})| \notag\\
	&\le C_{3} M^{N}C_{2}^{N} \beta_{N}^{2 k} C(\bm{\ell})+C_{3} M^{N}C_{2}^{N} N \beta_{N}^{2 k} \left(\frac{1}{N}\sum_{n=1}^{N}C_{n}(\bm{\ell})\right) \notag\\
	&\le C_{3} M^{N}C_{2}^{N} N \beta_{N}^{2 k} \left(C(\bm{\ell})+\frac{1}{N}\sum_{n=1}^{N}C_{n}(\bm{\ell})\right).
	\end{align}
	Using \eqref{p-11}, \eqref{p-12} and \eqref{p-15}, we have
	\begin{align}\label{p-16}
	\frac{|T_{A}(\bm{\ell})|}{C(\bm{\ell}_{0})}
	&\le C_{3} M^{N}C_{2}^{N} N \beta_{N}^{2 k} \left(\frac{C(\bm{\ell})}{C(\bm{\ell}_{0})}+\frac{1}{N}\sum_{n=1}^{N}\frac{C_{n}(\bm{\ell})}{C(\bm{\ell}_{0})}\right)\notag\\
	&\le C_{3} M^{N}C_{2}^{N} N \beta_{N}^{2 k} \left(e^{C_{1}}+\frac{1}{N}\sum_{n=1}^{N}e^{C_{1}}\right)\notag\\
	&=2C_{3}e^{C_{1}} M^{N}C_{2}^{N} N \beta_{N}^{2 k}.
	\end{align}
	Since $N^{-2k}\tbinom{N}{2k}\leq \frac{1}{(2k)!}$ and $\beta_{N}=\frac{1}{N}$, using \eqref{p-16} and Lemma \ref{lem-1} we have
	\begin{align}\label{p-17}
	\frac{\sum\limits_{|\bm{\ell}|=2 k}|T_{A}(\bm{\ell})|}{C(\bm{\ell}_{0})}
	&\le \sum\limits_{|\bm{\ell}|=2 k}2C_{3}e^{C_{1}} M^{N}C_{2}^{N} N \beta_{N}^{2 k}\notag\\
	&=4C_{3}e^{C_{1}} M^{N}C_{2}^{N} N \beta_{N}^{2 k}\tbinom{N}{2k} \notag\\
	&\le 4C_{3}e^{C_{1}} M^{N}C_{2}^{N} N \frac{1}{(2k)!}
	\end{align}
	Then we can use \eqref{p-7} and \eqref{p-17} to bound $I_{A}$ and $I_{B}$ respectively,
	\begin{equation}\label{p-18}
	I_{A}\le\frac{\left|\sum\limits_{k=k_{0}+1}^{\lfloor\frac{N}{2}\rfloor}\sum\limits_{|\bm{\ell}|=2k}T_{A}(\bm{\ell})\right|}{C(\bm{\ell}_{0})} \le \sum\limits_{k=k_{0}+1}^{\lfloor\frac{N}{2}\rfloor}
	\frac{\sum\limits_{|\bm{\ell}|=2 k}|T_{A}(\bm{\ell})|}{C(\bm{\ell}_{0})}
	\le 4C_{3}e^{C_{1}} M^{N}C_{2}^{N} N \sum\limits_{k=k_{0}+1}^{\lfloor\frac{N}{2}\rfloor}\frac{1}{(2k)!},
	\end{equation}
	\begin{equation}\label{p-19}
	I_{B}\le \frac{\left|\sum\limits_{k=0}^{k_{0}}\sum\limits_{|\bm{\ell}|=2k}T_{A}(\bm{\ell})\right|}{C(\bm{\ell}_{0})}\le
	\sum\limits_{k=0}^{k_{0}}
	\frac{\sum\limits_{|\bm{\ell}|=2 k}|T_{A}(\bm{\ell})|}{C(\bm{\ell}_{0})}
	\le 4C_{3}e^{C_{1}} M^{N}C_{2}^{N} N \sum\limits_{k=0}^{k_{0}}\frac{1}{(2k)!}.
	\end{equation}
	Searching a bound for $I_{C}$ is much easier. Assume $\bm{\ell}$ contains $2k$ kinks, using \eqref{p-5} and \eqref{p-11} we have
	\begin{equation}\label{p-20}
	\frac{T(\bm{\ell})}{C(\bm{\ell}_{0})}\le M^{N}C_{2}^{N} \beta_{N}^{2 k} \frac{C(\bm{\ell})}{C(\bm{\ell}_{0})}\le e^{C_{1}}M^{N}C_{2}^{N} \beta_{N}^{2 k}.
	\end{equation}
	Since $N^{-2k}\tbinom{N}{2k}\leq \frac{1}{(2k)!}$ and $\beta_{N}=\frac{1}{N}$, using \eqref{p-20} and Lemma \ref{lem-1} we have
	\begin{equation}\label{p-21}
	\frac{\sum\limits_{|\bm{\ell}|=2 k}T(\bm{\ell})}{C(\bm{\ell}_{0})} \le 2e^{C_{1}}M^{N}C_{2}^{N} \beta_{N}^{2 k}\tbinom{N}{2k}\le 2e^{C_{1}}M^{N}C_{2}^{N}\frac{1}{(2k)!}.
	\end{equation}
	Then use \eqref{p-7} and \eqref{p-21} to bound $I_{C}$,
	\begin{equation}\label{p-22}
	I_{C}\le \frac{\left|\sum\limits_{k=k_{0}+1}^{\lfloor\frac{N}{2}\rfloor}\sum\limits_{|\bm{\ell}|=2k}T(\bm{\ell})\right|}{C(\bm{\ell}_{0})}\le \sum\limits_{k=k_{0}+1}^{\lfloor\frac{N}{2}\rfloor}
	\frac{\sum\limits_{|\bm{\ell}|=2 k}|T(\bm{\ell})|}{C(\bm{\ell}_{0})}\le 2e^{C_{1}}M^{N}C_{2}^{N}\sum\limits_{k=k_{0}+1}^{\lfloor\frac{N}{2}\rfloor}\frac{1}{(2k)!}.
	\end{equation}
	To get our conclusion, we use \eqref{p-18}, \eqref{p-19} and \eqref{p-22} to bound $|I-I_{2k_{0}}|$ according to \eqref{p-14}
	\begin{align}
	|I-I_{2k_{0}}|
	&\le I_{A}+I_{B}I_{C}\notag\\
	&\le 4C_{3}e^{C_{1}} M^{N}C_{2}^{N} N \sum\limits_{k=k_{0}+1}^{\lfloor\frac{N}{2}\rfloor}\frac{1}{(2k)!}+4C_{3}e^{C_{1}} M^{N}C_{2}^{N} N \sum\limits_{k=0}^{k_{0}}\frac{1}{(2k)!}\times\notag\\
	&\times 2e^{C_{1}}M^{N}C_{2}^{N}\sum\limits_{k=k_{0}+1}^{\lfloor\frac{N}{2}\rfloor}\frac{1}{(2k)!}\notag \\
	&=4C_{3}e^{C_{1}} M^{N}C_{2}^{N} N \sum\limits_{k=k_{0}+1}^{\lfloor\frac{N}{2}\rfloor}\frac{1}{(2k)!}+\notag \\
	&+8C_{3}e^{2C_{1}} M^{2N}C_{2}^{2N} N \left(\sum\limits_{k=0}^{k_{0}}\frac{1}{(2k)!}\right)\left(\sum\limits_{k=k_{0}+1}^{\lfloor\frac{N}{2}\rfloor}\frac{1}{(2k)!}\right).\notag
	\end{align}
	We notice $4C_{3}e^{C_{1}} M^{N}C_{2}^{N} N \le 8C_{3}e^{2C_{1}} M^{2N}C_{2}^{2N} N \left(\sum\limits_{k=0}^{k_{0}}\frac{1}{(2k)!}\right)$, and thus
	\begin{align}\label{p-23}
	|I-I_{2k_{0}}|
	&\le 16C_{3}e^{2C_{1}} M^{2N}C_{2}^{2N} N \left(\sum\limits_{k=0}^{k_{0}}\frac{1}{(2k)!}\right)\left(\sum\limits_{k=k_{0}+1}^{\lfloor\frac{N}{2}\rfloor}\frac{1}{(2k)!}\right).
	\end{align}
	According to \eqref{p-23}, let $C_{4}=16C_{3}e^{2C_{1}}\sum\limits_{k=0}^{+\infty}\frac{1}{(2k)!}$ and $C_{5}=M^{2}C_{2}^{2}$. We complete our proof. $\hfill\square$
\end{proof}

\begin{rem}
We notice the Theorem \ref{thm-1} is established only when we assume the difference of the two energy surface is bounded from the above side, $V_{00}(q)-V_{11}(q)\le C_{1}$, which contains the special case $|V_{00}(q)-V_{11}(q)|\le C_{1}$. It is because the right side of \eqref{p-10} only contains $V_{00}(q)-V_{11}(q)$ but not the opposite term $V_{11}(q)-V_{00}(q)$. Intuitively, when we assume $V_{00}(q)-V_{11}(q)\le C_{1}$, the contribution of configurations with index sequence $\bm{\ell}_{0}=\{0,\cdots,0\}$ can dominate that of other configurations even those with index sequence $\bm{\ell}_{1}=\{1,\cdots,1\}$, and thus becomes a dominant part to the thermal average. 
\end{rem}
\begin{rem}
When we change the assumption $V_{00}(q)-V_{11}(q)<C_{1}$ to $V_{11}(q)-V_{00}(q)<C_{1}$ and keep other assumptions in Theorem \ref{thm-1}, we can still prove the same result only by changing $\bm{\ell}_{0}$ to $\bm{\ell}_{1}=\{1,\cdots,1\}$ in the proof. 
\end{rem}
\begin{rem}
When we assume the observable $\widehat{A}(\hat{q})$ is diagonal, which means $A_{01}(\hat{q})=A_{10}(\hat{q})=0$, and keep other assumptions. Revise the above proof and we notice $T_{C,k}(\bm{\ell})$ disappears because $A_{01}=A_{10}=0$. Using the same method in the proof, we can show $$\left|I-I_{2 k_{0}}\right| \leq C_{4} C_{5}^{N}\sum\limits
_{k=k_{0}+1}^{\left\lfloor\frac{N}{2}\right\rfloor} \frac{1}{(2 k) !},$$ where the factor $N$ in the previous result disappears.
\end{rem}

\par Besides the extended ring polymer representation and PIMD-SH method that we have been discussing so far, in recent years some other methods have been proposed to calculate the thermal average in the nonadiabatic regime. In \cite{LiuPath},  a proper reference measure was introduced for thermal averages in the nonadiabatic regime, and the weighted functions implicitly include all surface index configurations. In \cite{tao2018path}, an isomorphic Hamiltionian was introduced for multi-electronic-state quantum systems where the nonadiabtic coupling was included in the off-diagonal elements of the reduced matrix representation. The methods in \cite{LiuPath,tao2018path} do not yet make use of the asymptotic decreasing property with respect to the surface index sequence level which has been discussion in this section and the contributions of all surface index sequences are integrated together. However, the summation over the surface index configurations with exponentially small contributions to the thermal average causes unnecessary waste of computational cost and influences the efficiency of numerical methods. Therefore, the proper truncation idea can in theory apply to those methods as well, whereas there might be additional challenges which we may explore in the future.

\par Although Theorem \ref{thm-1} does not directly imply an alternative numerical method, we shall see in Section \ref{sec-4} that it provides a practical guide to further enhance the efficiency of the improved numerical algorithms with minimal truncation error introduced.

\section{Multi-level Monte Carlo path integral molecular dynamics method}\label{sec-4}

\subsection{An introduction of a proper reference measure}
\hspace{4mm} Let $\tilde{\pi}$ denote a distribution on $\mathbb{R}^{2d N}$ (phase space without surface indexes) which is to be specified, and we call it reference measure. We notice from \eqref{I_2}
\begin{align*}
I& =\frac{\int_{\mathbb{R}^{2dN}}\sum\limits_{k=0}^{\lfloor\frac{N}{2}\rfloor}\sum\limits_{|\bm{\ell}|=2k}W_{N}[A](\bm{q},\bm{p},\bm{\ell})e^{-\beta_{N}H_{N}(\bm{q},\bm{p},\bm{\ell})}\mathrm{d}\bm{q}\mathrm{d}\bm{p}}{\int_{\mathbb{R}^{2dN}}\sum\limits_{k=0}^{\lfloor\frac{N}{2}\rfloor}\sum\limits_{|\bm{\ell}|=2k}e^{-\beta_{N}H_{N}(\bm{q},\bm{p},\bm{\ell})}\mathrm{d}\bm{q}\mathrm{d}\bm{p}}  \\
&=\frac{\int_{\mathbb{R}^{2dN}}\left(\sum\limits_{k=0}^{\lfloor\frac{N}{2}\rfloor}\sum\limits_{|\bm{\ell}|=2k}W_{N}[A](\bm{q},\bm{p},\bm{\ell})\frac{e^{-\beta_{N}H_{N}(\bm{q},\bm{p},\bm{\ell})}}{\tilde{\pi}(\bm{q},\bm{p})}\right)\tilde{\pi}(\bm{q},\bm{p})\mathrm{d}\bm{q}\mathrm{d}\bm{p}}{\int_{\mathbb{R}^{2dN}}\left(\sum\limits_{k=0}^{\lfloor\frac{N}{2}\rfloor}\sum\limits_{|\bm{\ell}|=2k}\frac{e^{-\beta_{N}H_{N}(\bm{q},\bm{p},\bm{\ell})}}{\tilde{\pi}(\bm{q},\bm{p})}\right)\tilde{\pi}(\bm{q},\bm{p})\mathrm{d}\bm{q}\mathrm{d}\bm{p}},   
\end{align*}
which means $I$ can be reformulated to the ratio of two expectations with respect to $\tilde{\pi}$. In the following, we let $\tilde{\pi}$ take the form
\begin{equation}\label{4-1}
    \tilde{\pi}(\bm{q},\bm{p}):=\frac{e^{-\beta_{N}H_{N}(\bm{q},\bm{p},\bm{\ell}_{0})}}{\int_{\mathbb{R}^{2dN}}e^{-\beta_{N}H_{N}(\bm{q},\bm{p},\bm{\ell}_{0})}\mathrm{d}\bm{q}\mathrm{d}\bm{p}},
\end{equation}
where we recall here for convenience $\bm{\ell}_{0}=\{0,\cdots,0\}$. The motivation for choosing such a $\tilde{\pi}$ will be elaborated below. We remark that other choices of proper reference measures $\tilde{\pi}$ are possible, see, e.g. \cite{LiuPath}.
\par We introduce the following notations. We use $\mathrm{E}_{\tilde{\pi}}A_{k}$ and $\mathrm{E}_{\tilde{\pi}}B_{k}$ to represent the expectation of $$A_{k}(\bm{q},\bm{p}):=\sum_{|l|=2 k} \frac{W_{N}[A](\bm{q},\bm{p},\bm{\ell}) e^{-\beta_{N} H_{N}(\bm{q},\bm{p},\bm{\ell})}}{e^{-\beta_{N} H_{N}\left(\bm{q},\bm{p},\bm{\ell}_{0}\right)}} \quad\text{and}\quad B_{k}(\bm{q},\bm{p}):=\sum_{|l|=2 k} \frac{ e^{-\beta_{N} H_{N}(\bm{q},\bm{p},\bm{\ell})}}{e^{-\beta_{N} H_{N}\left(\bm{q},\bm{p},\bm{\ell}_{0}\right)}}$$ on $\mathbb{R}^{2dN}$ with respect to distribution $\tilde{\pi}(\bm{q},\bm{p})$ respectively, namely,
\begin{align*}
    &\mathrm{E}_{\tilde{\pi}}A_{k}:=\int_{R^{2 N d}}A_{k}(\bm{q},\bm{p}) \tilde{\pi}(\bm{q},\bm{p}) \mathrm{d} \bm{q} \mathrm{d} \bm{p}=\int_{R^{2 N d}}\left(\sum_{|l|=2 k} \frac{W_{N}[A](\bm{q},\bm{p},\bm{\ell}) e^{-\beta_{N} H_{N}(\bm{q},\bm{p},\bm{\ell})}}{e^{-\beta_{N} H_{N}\left(\bm{q},\bm{p},\bm{\ell}_{0}\right)}}\right) \tilde{\pi}(\bm{q},\bm{p}) \mathrm{d} \bm{q} \mathrm{d} \bm{p}\\
    &\text{and} \quad \mathrm{E}_{\tilde{\pi}}B_{k}:=\int_{R^{2 N d}}B_{k}(\bm{q},\bm{p}) \tilde{\pi}(\bm{q},\bm{p}) \mathrm{d} \bm{q} \mathrm{d} \bm{p}=\int_{R^{2 N d}}\left(\sum_{|l|=2 k} \frac{ e^{-\beta_{N} H_{N}(\bm{q},\bm{p},\bm{\ell})}}{e^{-\beta_{N} H_{N}\left(\bm{q},\bm{p},\bm{\ell}_{0}\right)}}\right) \tilde{\pi}(\bm{q},\bm{p}) \mathrm{d} \bm{q} \mathrm{d} \bm{p}.
\end{align*}
Then the extended ring polymer representation $I$ can be transformed to the ratio of two expectations with respect to the distribution $\tilde{\pi}$:
\begin{equation}\label{4-2}
    I
    =\frac{\int_{\mathbb{R}^{2dN}}\sum\limits_{k=0}^{\lfloor\frac{N}{2}\rfloor}\sum\limits_{|\bm{\ell}|=2k}W_{N}[A](\bm{q},\bm{p},\bm{\ell})e^{-\beta_{N}H_{N}(\bm{q},\bm{p},\bm{\ell})}\mathrm{d}\bm{q}\mathrm{d}\bm{p}}{\int_{\mathbb{R}^{2dN}}\sum\limits_{k=0}^{\lfloor\frac{N}{2}\rfloor}\sum\limits_{|\bm{\ell}|=2k}e^{-\beta_{N}H_{N}(\bm{q},\bm{p},\bm{\ell})}\mathrm{d}\bm{q}\mathrm{d}\bm{p}}  
    =\frac{\mathrm{E}_{\tilde{\pi}}\sum\limits_{k=0}^{\lfloor\frac{N}{2}\rfloor}A_{k}}{\mathrm{E}_{\tilde{\pi}}\sum\limits_{k=0}^{\lfloor\frac{N}{2}\rfloor}B_{k}}.
\end{equation}
\par Our motivation for choosing such a specific distribution $\tilde{\pi}$ lies in two aspects. First, according to the expression of $W_{N}[A]$ in \eqref{W} and $\langle \ell| G_{k}|\ell^{\prime}\rangle$ in \eqref{h}, we notice
\begin{equation*}
    e^{\beta_{N}\left\langle \ell_{k}\left|G_{k}\right| \ell_{k+1}\right\rangle}=
    \begin{cases}
    e^{\frac{\beta_{N}}{2 M} p_{k}^{2}+\frac{M}{2 \beta_{N}} \left(q_{k}-q_{k+1}\right)^{2}+\beta_{N} V_{\ell_{k}\ell_{k}}(q_{k})}\cosh^{-1}(\beta_{N}|V_{01}(q_{k})|),&\ell_{k}=\ell_{k+1}\\
    e^{\frac{\beta_{N}}{2 M} p_{k}^{2}+\frac{M}{2 \beta_{N}} \left(q_{k}-q_{k+1}\right)^{2}+\beta_{N} \frac{V_{00}(q_{k})+V_{11}(q_{k})}{2}}\sinh^{-1}(\beta_{N}|V_{01}(q_{k})|),&\ell_{k}\neq \ell_{k+1}
    \end{cases}
\end{equation*}
which means if there exists a kink between $\ell_{k}$ and $\ell_{k+1}$, because of the existence of $\sinh^{-1}(\beta_{N}|V_{01}(q_{k})|)\approx \sinh^{-1}(\frac{C}{N})$, the term $e^{\beta_{N}\left\langle \ell_{k}\left|G_{k}\right| \ell_{k+1}\right\rangle}$ attains a large value. We notice from \eqref{W} that $W_{N}[A]$ depends linearly on $e^{\beta_{N}\left\langle \ell_{k}\left|G_{k}\right| \ell_{k+1}\right\rangle}$ and hence, when there exists a kink between $\ell_{k}$ and $\ell_{k+1}$, $W_{N}[A]$ also attains a large value. As a consequence, when we use PIMD-SH method to calculate the ring polymer representation $I$, the trajectory may occasionally visit large values, affecting its numerical performance. We can also see this numerical performance from Figure 7 of Section \ref{sec-5} where the trajectory of PIMD-SH method visits some large values in a non-negligible probability. However, we can avoid this shortcoming when we consider using \eqref{4-2} to calculate $I$, because we notice the term $e^{\beta_{N}\left\langle \ell_{k}\left|G_{k}\right| \ell_{k+1}\right\rangle}$ in $W_{N}[A]$ is neutralized by the term $e^{-\beta_{N}\left\langle \ell_{k}\left|G_{k}\right| \ell_{k+1}\right\rangle}$ in $e^{-\beta_{N}H_{N}(\bm{q},\bm{p},\bm{\ell})}$, which means $W_{N}[A](\bm{q},\bm{p},\bm{\ell})e^{-\beta_{N}H_{N}(\bm{q},\bm{p},\bm{\ell})}$ and thus $A_{k}$ and $B_{k}$ won't take large values.
\par Second, according to the formula of $e^{-\beta_{N} H_{N}(\bm{q},\bm{p},\bm{\ell})}$, when $\bm{\ell}$ contains $2k$ kinks, $e^{-\beta_{N} H_{N}(\bm{q},\bm{p},\bm{\ell})}\approx \frac{C}{N^{2k}}$, exponentially small compared to $e^{-\beta_{N} H_{N}(\bm{q},\bm{p},\bm{\ell}_{0})}$. Because $A_{k}$ and $B_{k}$ depend linearly on $e^{-\beta_{N} H_{N}(\bm{q},\bm{p},\bm{\ell})}$ where $|\bm{\ell}|=2k$, $A_{k}$ and $B_{k}$ are very small when $k$ is large. And we notice
\begin{equation*}
B_{k}(\bm{q},\bm{p})=\sum_{|l|=2 k} \frac{ e^{-\beta_{N} H_{N}(\bm{q},\bm{p},\bm{\ell})}}{e^{-\beta_{N} H_{N}\left(\bm{q},\bm{p},\bm{\ell}_{0}\right)}} \approx
2\tbinom{N}{2k}\frac{C}{N^{2k}}\le \frac{C}{(2k)!},
\end{equation*}
which means not only $A_{k}$ and $B_{k}$ are very small but also decrease fast while $k$ grows. An advantage of the fast decreasing property of $A_{k}$ and $B_{k}$ is that we can  optimize the sampling times for each sub-estimators $\widehat{\mathrm{E}}_{\tilde{\pi}}A_{k}$ and $\widehat{\mathrm{E}}_{\tilde{\pi}}B_{k}$ defined later to minimize the total computational variance, leveraging the structure of the system itself.
\par Recall the definition of $I_{2k_{0}}$ and the notations we introduced above, we have
\begin{equation*}
I_{2k_{0}}
=\frac{\int_{\mathbb{R}^{2dN}}\sum\limits_{k=0}^{k_{0}}\sum\limits_{|\bm{\ell}|=2k}W_{N}[A](\bm{q},\bm{p},\bm{\ell})e^{-\beta_{N}H_{N}(\bm{q},\bm{p},\bm{\ell})}\mathrm{d}\bm{q}\mathrm{d}\bm{p}}{\int_{\mathbb{R}^{2dN}}\sum\limits_{k=0}^{k_{0}}\sum\limits_{|\bm{\ell}|=2k}e^{-\beta_{N}H_{N}(\bm{q},\bm{p},\bm{\ell})}\mathrm{d}\bm{q}\mathrm{d}\bm{p}}
=\frac{\mathrm{E}_{\tilde{\pi}}\sum\limits_{k=0}^{k_{0}}A_{k}}{\mathrm{E}_{\tilde{\pi}}\sum\limits_{k=0}^{k_{0}}B_{k}}
=\frac{\sum\limits_{k=0}^{k_{0}}\mathrm{E}_{\tilde{\pi}}A_{k}}{\sum\limits_{k=0}^{k_{0}}\mathrm{E}_{\tilde{\pi}}B_{k}}
. 
\end{equation*}
Theorem \ref{thm-1} guarantees $I_{2k_{0}}$ is a good approximation to $I$ when $k_{0}$ is properly chosen. Lemma \ref{lem-1} shows each $A_{k}$ and $B_{k}$ is a summation of $2\tbinom{N}{2k}$ terms, which means a huge computational cost when $k$ is large. However, a truncation by $k_{0}$ means we only need to compute $A_{k}$ and $B_{k}$ for relatively small $k$, which saves the computation power. Instead computing $\mathrm{E}_{\tilde{\pi}}\sum\limits_{k=0}^{k_{0}}A_{k}$ and $\mathrm{E}_{\tilde{\pi}}\sum\limits_{k=0}^{k_{0}}B_{k}$, we compute the sub-estimators $\mathrm{E}_{\tilde{\pi}}A_{k}\,(0\le k\le k_{0})$ and $\mathrm{E}_{\tilde{\pi}}B_{k}\,(0\le k\le k_{0})$ respectively and optimize the computation power assigned to each sub-estimator. 
\par Notice the similarity of the representations for $\mathrm{E}_{\tilde{\pi}}A_{k}$ and $\mathrm{E}_{\tilde{\pi}}B_{k}$, we need to find a method to calculate the integral
$$\int_{\mathbb{R}^{2dN}}f(\bm{q},\bm{p})\tilde{\pi}(\bm{q},\bm{p})\mathrm{d}\bm{q}\mathrm{d}\bm{p}$$ for a given function $f(\bm{q},\bm{p})$ with respect to the distribution $\tilde{\pi}(\bm{q},\bm{p})$ on $\mathbb{R}^{2dN}$.
For example, the Langevin dynamics $\bm{z}(t)$ below is ergodic with respect to $\tilde{\pi}$ 
\begin{equation}\label{4-3}
\begin{cases}
\mathrm{d} \bm{q}=\nabla_{\bm{p}} H_{N}(\bm{q}(t), \bm{p}(t), \bm{\ell}_{0}) \mathrm{d} t,\\
\mathrm{d} \bm{p}=-\nabla_{\bm{q}} H_{N}(\bm{q}(t), \bm{p}(t), \bm{\ell}_{0}) \mathrm{d} t-\gamma \bm{p} \mathrm{d} t+\sqrt{2 \gamma \beta_{N}^{-1} M} \mathrm{d} \bm{B},
\end{cases}
\end{equation}
where $\gamma$ denotes the friction constant. Checking the ergodic property is a well studied subject and we omit the details here. Thus to compute the integral $\mathrm{E}_{\tilde{\pi}}A_{k}\,(0\le k\le k_{0})$ and $\mathrm{E}_{\tilde{\pi}}B_{k}\,(0\le k\le k_{0})$ to get the truncated thermal average $I_{2k_{0}}$, we sample the trajectory $\bm{z}(t)$ by a time average
\begin{equation*}
    \mathrm{E}_{\tilde{\pi}}A_{k}\approx\lim _{T \rightarrow \infty} \frac{1}{T} \int_{0}^{T} A_{k}(\bm{z}(t)) \mathrm{d} t
    \quad \text{and}\quad
    \mathrm{E}_{\tilde{\pi}}B_{k}\approx\lim _{T \rightarrow \infty} \frac{1}{T} \int_{0}^{T} B_{k}(\bm{z}(t)) \mathrm{d} t.
\end{equation*}
\par Next we consider the numerical implementation, the sub-estimators $\widehat{\mathrm{E}}_{\tilde{\pi}}A_{k}$ and $\widehat{\mathrm{E}}_{\tilde{\pi}}B_{k}$ for computing $\mathrm{E}_{\tilde{\pi}}A_{k}$ and $\mathrm{E}_{\tilde{\pi}}B_{k}$ can be written as
\begin{equation}\label{4-4}
    \mathrm{E}_{\tilde{\pi}}A_{k} \approx \widehat{\mathrm{E}}_{\tilde{\pi}}A_{k}(N_{k}):=\frac{1}{N_{k}}\sum\limits_{i=1}^{N_{k}}A_{k}(\bm{z}_{k}^{a}(t_{i}))
    \quad\text{and}\quad
    \mathrm{E}_{\tilde{\pi}}B_{k} \approx \widehat{\mathrm{E}}_{\tilde{\pi}}B_{k}(N_{k}):=\frac{1}{N_{k}}\sum\limits_{i=1}^{N_{k}}B_{k}(\bm{z}_{k}^{b}(t_{i})),
\end{equation}
where $\bm{z}^{a}_{k}(t)$ and $\bm{z}_{k}^{b}(t)$ are independent trajectories of $\bm{z}(t)$ constructed in \eqref{4-3} and $t_{i}=i\Delta t$. To sum up, we respectively sample the trajectory $\bm{z}_{k}^{a}$ and $\bm{z}_{k}^{b}$ with a time step $\Delta t$ and the sampling times $N_{k}$ to approximate $\mathrm{E}_{\tilde{\pi}}A_{k}$ and $\mathrm{E}_{\tilde{\pi}}B_{k}$.
\par Now we have the the numerical approximation of the truncated thermal average $I_{2k_{0}}$:
\begin{equation}\label{4-5}
I_{2k_{0}}=\frac{\sum\limits_{k=0}^{k_{0}}\mathrm{E}_{\tilde{\pi}}A_{k}}{\sum\limits_{k=0}^{k_{0}}\mathrm{E}_{\tilde{\pi}}B_{k}}
\approx \frac{\sum\limits_{k=0}^{k_{0}}\widehat{\mathrm{E}}_{\tilde{\pi}}A_{k}(N_{k})}{\sum\limits_{k=0}^{k_{0}}\widehat{\mathrm{E}}_{\tilde{\pi}}B_{k}(N_{k})}=\frac{\sum\limits_{k=0}^{k_{0}}\frac{1}{N_{k}}\sum\limits_{i=1}^{N_{k}}A_{k}(\bm{z}_{k}^{a}(t_{i}))}{\sum\limits_{k=0}^{k_{0}}\frac{1}{N_{k}}\sum\limits_{i=1}^{N_{k}}B_{k}(\bm{z}_{k}^{b}(t_{i}))}.
\end{equation}
We define $N_{T}:=\sum\limits_{k=0}^{k_{0}}N_{k}$ to be the total computation times, and thus we actually need to sample $2N_{T}$ times (sample $N_{T}$ times for numerator and denominator respectively) to approximate $I_{2k_{0}}$. 
\begin{rem}
In this paper, the numerical construction of the trajectories are obtained by the BAOAB method to be described in Subsection \ref{subsec-5.1}.
\end{rem}

\subsection{RM-PIMD method for truncated thermal averages}\label{subsec-5.1}
\hspace{4mm} After giving the general numerical method \eqref{4-5} for the truncated thermal average $I_{2k_{0}}$ without specifying sampling the times $N_{k}\,(0\le k\le k_{0})$, we are yet to distribute the computation power for each $k$. A simple way is to arrange the same sampling times for each sub-estimator $\widehat{\mathrm{E}}_{\tilde{\pi}}A_{k}$ ans $\widehat{\mathrm{E}}_{\tilde{\pi}}B_{k}$, which means $N_{k}=N_{0}$ for all $k$ from $0$ to $k_{0}$ and we name this numerical method the path integral molecular dynamics method with a reference measure (RM-PIMD). And the numerical scheme can be presented as
\begin{equation}\label{4-6}
I_{2k_{0}}\approx \frac{\sum\limits_{k=0}^{k_{0}}\widehat{\mathrm{E}}_{\tilde{\pi}}A_{k}(N_{0})}{\sum\limits_{k=0}^{k_{0}}\widehat{\mathrm{E}}_{\tilde{\pi}}B_{k}(N_{0})}
=\frac{\sum\limits_{k=0}^{k_{0}}\frac{1}{N_{0}}\sum\limits_{i=1}^{N_{0}}A_{k}(\bm{z}_{k}^{a}(t_{i}))}{\sum\limits_{k=0}^{k_{0}}\frac{1}{N_{0}}\sum\limits_{i=1}^{N_{0}}B_{k}(\bm{z}_{k}^{b}(t_{i}))}.   
\end{equation}
RM-PIMD is not an optimal way for the choice of $N_{k}$, and it only serves as a comparison to MLMC-PIMD method to be introduced later.
\par  Recall the definition of the total computation times $N_{T}$, the total computation times of RM-PIMD is $N_{T}=(k_{0}+1)N_{0}$. We give a detailed algorithm of the RM-PIMD method below.
\begin{algorithm}[ht]\label{alg-1}
\caption{RM-PIMD}
\hspace*{0.02in} {\bf Input:}
Total computation times $N_{T}$, time step $\Delta t$ and $k_{0}$\\
\hspace*{0.02in} {\bf Output:} 
Truncated thermal average $I_{2k_{0}}$
\begin{algorithmic}[1]
\State Compute the sampling times $N_{0}$ for each sub-estimator using $N_{T}=(k_{0}+1)N_{0}$.
\State With the BAOAB method, obtain sub-estimators in \eqref{4-4} to compute $\widehat{\mathrm{E}}_{\tilde{\pi}}A_{k}$ and $\widehat{\mathrm{E}}_{\tilde{\pi}}B_{k}$.
\State \Return Compute $I_{2k_{0}}$ with the numerical approximation \eqref{4-6}.
\end{algorithmic}
\end{algorithm}

\subsection{MLMC-PIMD method for truncated thermal averages}
\hspace{4mm} After showing the decaying property of the variances of the sub-estimators $\widehat{\mathrm{E}}_{\tilde{\pi}}A_{k}$ and $\widehat{\mathrm{E}}_{\tilde{\pi}}B_{k}$ and the increasing sampling difficulty as $k$ increases, we intend to optimize the numbers of samples allocated to each sub-estimator to minimize the total computational cost. We fix a variance and optimize each sub-estimator's sampling number to achieve a minimal computational cost in the spirit of multi-level Monte Carlo method\cite{giles2013multilevel,Giles2008MultilevelMC,giles_2015}. We emphasize that this is the main motivation of constructing the numerical scheme below.
\par Now we elaborate our analysis. Consider the denominator of numerical scheme \eqref{4-5}
\begin{equation*}
    \sum\limits_{k=0}^{k_{0}}\widehat{\mathrm{E}}_{\tilde{\pi}}B_{k}=\sum\limits_{k=0}^{k_{0}}\frac{1}{N_{k}}\sum\limits_{i=1}^{N_{k}}B_{k}(\bm{z}_{k}^{b}(t_{i})).
\end{equation*}
For different $\widehat{\mathrm{E}}_{\tilde{\pi}}B_{k}\, (0\leq k \leq k_{0})$, we sample $N_{k}$ times.
According to the expression for variance
\begin{equation*}
    \operatorname{Var}(X)=\mathrm{E}(X^{2})-(\mathrm{E}X)^{2}\leq \mathrm{E}(X^{2})\leq \max (|X|)^{2},
\end{equation*}
for simplicity we assume that $z^{b}_{k}(t_{i})$ are independent, then we have
\begin{align}\label{4-7}
    \operatorname{Var}(\sum\limits_{k=0}^{k_{0}}\widehat{\mathrm{E}}_{\tilde{\pi}}B_{k})
    &=\operatorname{Var}(\sum\limits_{k=0}^{k_{0}}\frac{1}{N_{k}}\sum\limits_{i=1}^{N_{k}}B_{k}(z^{b}_{k}(t_{i})))\notag=\sum\limits_{k=0}^{k_{0}}\frac{1}{N_{k}^{2}}\sum\limits_{i=1}^{N_{k}}\operatorname{Var}(B_{k}(\bm{z}_{k}^{b}(t_{i})))\notag\\
    &\le \sum\limits_{k=0}^{k_{0}}\frac{1}{N_{k}^{2}}\sum\limits_{i=1}^{N_{k}}\max(|B_{k}(\bm{z}_{k}^{b}(t_{i}))|)= \sum\limits_{k=0}^{k_{0}}\frac{1}{N_{k}}\max(|B_{k}(\bm{z})|)^{2}.
\end{align}
For $|\bm{\ell}|=2k$, we assume the $2k$ kinks occur after the surface index $\ell_{i_{1}},\dots,\ell_{i_{2k}}(1\leq i_{1}<\dots<i_{2k}\leq N)$, then we have 
\begin{align}\label{4-8}
    \frac{e^{-\beta_{N}H_{N}(\bm{q},\bm{p},\bm{\ell})}}{e^{-\beta_{N}H_{N}(\bm{q},\bm{p},\bm{\ell}_{0})}}&
    =\frac{e^{-\beta_{N}\sum\limits_{k=1}^{N}V(q_{k},\ell_{k},\ell_{k+1})}\prod\limits\limits_{j=1}^{2k}\sinh(\beta_{N}|V_{01}(q_{i_{j}})|)\prod\limits_{j\neq i_{1},\dots,i_{2k}}\cosh(\beta_{N}|V_{01}(q_{j})|)}{e^{-\beta_{N}\sum\limits_{k=1}^{N}V_{00}(q_{k})}\prod\limits_{j=1}^{N}\cosh(\beta_{N}|V_{01}(q_{j})|)} \notag\\
    &=e^{-\beta_{N}\sum\limits_{k=1}^{N}(V(q_{k},\ell_{k},\ell_{k+1})-V_{00}(q_{k}))}\prod\limits_{j=1}^{2k}\frac{\sinh(\beta_{N}|V_{01}(q_{i_{j}})|)}{\cosh(\beta_{N}|V_{01}(q_{i_{j}})|)}.
\end{align}
We further assume $V_{00}(q)-V_{11}(q)\leq C_{1}$ and $|V_{01}(q)|\leq C_{2}$ as what we assumed in Theorem \ref{thm-1}. According to \eqref{p-4}, we have
\begin{equation}\label{4-9}
    \frac{\sinh(\beta_{N}|V_{01}(q_{i_{j}})|)}{\cosh(\beta_{N}|V_{01}(q_{i_{j}})|)}\le MC_{2}\beta_{N}.
\end{equation}
Using \eqref{p-10}, we get
\begin{equation}\label{4-10}
    e^{-\beta_{N}\sum\limits_{k=1}^{N}(V(q_{k},\ell_{k},\ell_{k+1})-V_{00}(q_{k}))}\le e^{C_{1}}.
\end{equation}
According \eqref{4-9} and \eqref{4-10}, we can bound \eqref{4-8}
\begin{equation}\label{4-11}
\frac{e^{-\beta_{N}H_{N}(\bm{q},\bm{p},\bm{\ell})}}{e^{-\beta_{N}H_{N}(\bm{q},\bm{p},\bm{\ell}_{0})}}
\le e^{C_{1}} (\frac{MC_{2}}{N})^{2k}=e^{C_{1}}(\frac{C}{N})^{2k}.
\end{equation}
Then to bound $B_{k}(\bm{z})$, we notice $\{\bm{\ell}:|\bm{\ell}|=2k\}$ contains $2\tbinom{N}{2k}$ different index sequences,
\begin{equation}\label{4-12}
    \max(|B_{k}(\bm{z})|)
    \leq 2\binom{N}{2k}e^{C_{1}}(\frac{C}{N})^{2k}
    \leq \frac{2 C^{N} e^{C_{1}}}{(2k)!},
\end{equation}
where we use $N^{-2k}\tbinom{N}{2k}\leq \frac{1}{(2k)!}$.
Use the inequality \eqref{4-12} in the estimation of variance \eqref{4-7}, we have
\begin{equation}\label{4-13}
    \operatorname{Var}(\sum\limits_{k=0}^{k_{0}}\widehat{\mathrm{E}}_{\tilde{\pi}}B_{k})
    \leq \sum\limits_{k=0}^{k_{0}}\frac{4C^{2N}e^{2C_{1}}}{N_{k}((2k)!)^{2}}.
\end{equation}
As for the estimation of the total computational cost, for different $k$, according to the form of $B_{k}(\bm{z})$, we need to sum $2\binom{N}{2k}$ times to generate $B_{k}(\bm{z}_{k}^{b}(t_{i}))$. Thus the total computational cost of the denominator is
\begin{equation*}
    \text{Total computational cost}\propto\sum\limits_{k=0}^{k_{0}} 2N_{k}\binom{N}{2k}.
\end{equation*}
We notice as $k$ increases from $0$ to $\frac{1}{2}\lfloor\frac{N}{2}\rfloor$, the variances of sub-estimators $\widehat{\mathrm{E}}_{\tilde{\pi}}B_{k}$ decrease exponentially while the computational cost increases. Thus the total variance is mainly contributed by the variances of sub-estimators $\widehat{\mathrm{E}}_{\tilde{\pi}}B_{k}$ with relatively small $k$ while the total computational cost is mainly dominated by the sub-estimators $\widehat{\mathrm{E}}_{\tilde{\pi}}B_{k}$ with relatively large $k$. 
\par Following the spirit of multi-level Monte Carlo method \cite{Giles2008MultilevelMC,giles_2015,giles2013multilevel}, we sample more times on the sub-estimators $\widehat{\mathrm{E}}_{\tilde{\pi}}B_{k}$ with relatively small $k$ to reduce the total variance while saving the average computational cost. Quantitatively, for a prescribed small $\epsilon$, we optimize the choice of $N_{k}\, (0\leq k\leq k_{0})$ to reduce the computational cost subjective to the condition that the total variance is less than $\epsilon$. And finding the optimal choice of $N_{k}$ connects to a conditional extremum problem:
\begin{equation*}
    \text{minimize}\, :\sum\limits_{k=0}^{k_{0}}2N_{k}\binom{N}{2k},
    \quad\text{while} \,  :\sum\limits_{k=0}^{k_{0}}\frac{4C_{2}^{2N}e^{2|C_{3}|}}{N_{k}((2k)!)^{2}}\leq \epsilon.
\end{equation*}
Using the Lagrangian multiplier method, it is easy to conclude that (Readers can see the detailed derivation of this result in Appendix A):
\begin{equation}\label{4-14}
    N_{k}=\frac{4C^{2N}e^{2|C_{3}|}}{\epsilon}\left(\sum\limits_{k=0}^{k=k_{0}}\frac{1}{(2k)!}\sqrt{\tbinom{N}{2k}}\right)\left(\tbinom{N}{2k}\right)^{-\frac{1}{2}}\frac{1}{(2k)!}.
\end{equation}
and the computational cost achieves to its minima.
\par From the above analysis, the times $N_{k}$ we need to sample for sub-estimators $\widehat{\mathrm{E}}_{\tilde{\pi}}A_{k}$ and $\widehat{\mathrm{E}}_{\tilde{\pi}}B_{k}$ should be proportional to $i_{k}:=\left(\tbinom{N}{2k}\right)^{-\frac{1}{2}}\frac{1}{(2k)!}$. Thus when we use the numerical scheme \eqref{4-5} to compute the truncated thermal average $I_{2k_{0}}$ with a given total computation times $N_{T}$, the sampling times $N_{k}$ for sub-estimators $\widehat{\mathrm{E}}_{\tilde{\pi}}A_{k}$ and $\widehat{\mathrm{E}}_{\tilde{\pi}}B_{k}$ should satisfy
\begin{equation}\label{4-15}
    \sum\limits_{k=0}^{k_{0}}N_{k}=N_{T}
    \quad\text{and}\quad
    N_{0}:N_{1}:\cdots:N_{k_{0}}=i_{0}:i_{1}:\cdots:i_{k_{0}}.
\end{equation}
And we name this numerical method the multi-level Monte Carlo path integral molecular dynamics method (MLMC-PIMD). We give a complete MLMC-PIMD algorithm for the computation of truncated thermal average $I_{2k_{0}}$.
\begin{algorithm}[ht]\label{alg-2}
\caption{MLMC-PIMD}
\hspace*{0.02in} {\bf Input:}
Total computation times $N_{T}$, time step $\Delta t$ and $k_{0}$\\
\hspace*{0.02in} {\bf Output:} 
Truncated thermal average $I_{2k_{0}}$
\begin{algorithmic}[1]
\State Compute the sampling times $N_{k}\,(0\le k\le k_{0})$ for each sub-estimator satisfying $N_{0}:\cdots:N_{k_{0}}=i_{0}:\cdots:i_{k_{0}}$ and $\sum\limits_{k=0}^{k_{0}}N_{k}=N_{T}$.
\State With BAOAB method, obtain sub-estimators in \eqref{4-4} to sample the trajectory $N_{k}$ times to compute $\widehat{\mathrm{E}}_{\tilde{\pi}}A_{k}$ and $\widehat{\mathrm{E}}_{\tilde{\pi}}B_{k}$ for each $k$.
\State \Return Compute $I_{2k_{0}}$ with the numerical approximation \eqref{4-5}.
\end{algorithmic}
\end{algorithm}

\section{Sub-estimator and experiment report}\label{sec-5}
\subsection{The BAOAB method for sub-estimator in MLMC-PIMD}
\hspace{4mm}
In numerical experiments, we divide the Langevin dynamics constructed in \eqref{4-3} to sample the distribution to compute the integral $\mathrm{E}_{\tilde{\pi}}A_{k}$ and $\mathrm{E}_{\tilde{\pi}}B_{k}$. First we fix an initial point $\bm{z}_{0}=(\bm{q}(0),\bm{p}(0))$ according to the Gaussian distribution $\mathcal{N}(0,M\beta_{N}^{-1})$ at time $t_{0}=0$ and then set the time step $\Delta t$. In this article, we apply the BAOAB method for the sub-estimator of Langevin dynamics, which means we repeat the BAOAB method in each time interval $[t_{n},t_{n}+\Delta t]\, (t_{n}=n\Delta t)$ until we reach time $T\gg \Delta t$.
\par
We give a brief introduction to the BAOAB method for Langevin dynamics in the context of 
Langevin thermostat\cite{leimkuhler2013robust}. The Langevin dynamics can be written as $$\left\{\begin{array}{l}
{\mathrm{d} \bm{q}=M^{-1} \bm{p} \mathrm{d} t}, \\
{\mathrm{d} \bm{p}=-\nabla_{\bm{q}} H_{N} \mathrm{d} t-\gamma \bm{p} \mathrm{d} t+\sqrt{2 \gamma \beta_{N}^{-1}M}  \mathrm{d} \bm{W}.}
\end{array}\right.$$In the BAOAB method, the Langevin dynamics is divided into three parts, the kinetic part (part $\text{``A''}$):$$\left\{\begin{array}{l}
{\mathrm{d} \bm{q}=M^{-1} \bm{p} \mathrm{d} t}, \\
{\mathrm{d} \bm{p}=0,}
\end{array}\right.$$
the potential part (part $\text{``B''}$):
$$\left\{\begin{array}{l}
{\mathrm{d} \bm{q}=0}, \\
{\mathrm{d} \bm{p}=-\nabla_{\bm{q}} H_{N} \mathrm{d} t,}
\end{array}\right.$$
and the Langevin thermostat part (part $\text{``O''}$):
$$\left\{\begin{array}{l}
{\mathrm{d} \bm{q}=0}, \\
{\mathrm{d} \bm{p}=-\gamma \bm{p} \mathrm{d} t+\sqrt{2 \gamma \beta_{N}^{-1}M}  \mathrm{d} \bm{W}.}
\end{array}\right.$$
An advantage of this method is that each of these splitted parts can be integrated individually. For example, after we know $\bm{z}_{0}=(\bm{q}(0),\bm{p}(0))$ the Langevin thermostat part (part $\text{``O''}$) has a solution:
$$\left\{\begin{array}{l}
{\bm{q}(t)=\bm{q}(0)}, \\
{\bm{p}(t)=e^{-\gamma t} \bm{p}(0)+\sqrt{\left(1-e^{-2 \gamma t}\right)\left(\beta_{N}^{-1} M\right)} \bm{W}(t),}
\end{array}\right.$$
where $\bm{W}$ denotes a multi-dimensional Gaussian random variable.
\par By using BAOAB method, first we solve part B and part A in order within the time step $\Delta t/2$, then solve part O in a full time step $\Delta t$ and finally solve part A and part B within the time step $\Delta t/2$. In integrating each small step, we use their exact solutions. It is shown \cite{leimkuhler2013robust} that the BAOAB method has a higher accuracy than other splitting methods and can be implemented with larger time steps.

\subsection{Numerical results}
\hspace{4mm}
To test the validity of MLMC-PIMD method, we need to test the convergence of the sub-estimators $\widehat{\mathrm{E}}_{\tilde{\pi}}A_{k}$ and $\widehat{\mathrm{E}}_{\tilde{\pi}}B_{k}$, the convergence of the truncated thermal average $I_{2k_{0}}$ with the increasing of $k_{0}$ and the improved numerical performance of MLMC-PIMD method in terms of simulation time and accuracy compared to RM-PIMD method and PIMD-SH method. We implement numerical experiments on the following example.

\subsubsection{Test example}
\hspace{4mm}
The potentials are chosen to be one-dimensional taking the form:
\begin{equation}\label{5-1}
\left\{\begin{array}{l}
{V_{00}=x^{2}+2(1-\cos(x))-3e^{-(x-1)^{2}}-2e^{-(x-1.5)^2}+3}, \\
{V_{11}=x^{2}+4(1-\cos(x))-2e^{-(x-1)^{2}}+3}, \\
{V_{01}=V_{10}=e^{-x^{2}},}
\end{array}\right.
\end{equation}
where we choose $V_{00} \le V_{11}$ to satisfy the assumption in Theorem \ref{thm-1}. The two energy surfaces respectively achieve their minimas around $x=0.3$ and $x=0.8$ and almost intersect around $x=-0.5$, while the off-diagonal potential is symmetric and achieves its maxima at $x=0$. Thus the potentials are asymmetric in this example and the location where the equilibrium distribution is mainly concentrated deviates from the most active hopping area, which makes this test example more numerically challenging. We plot the potentials on position interval $(-\pi,\pi)$ in the left picture of Figure 1.
\begin{figure}[ht] \label{fig-1}
\centering 
\begin{minipage}[ht]{0.48\textwidth} 
	\centering 
	\includegraphics[width=7cm]{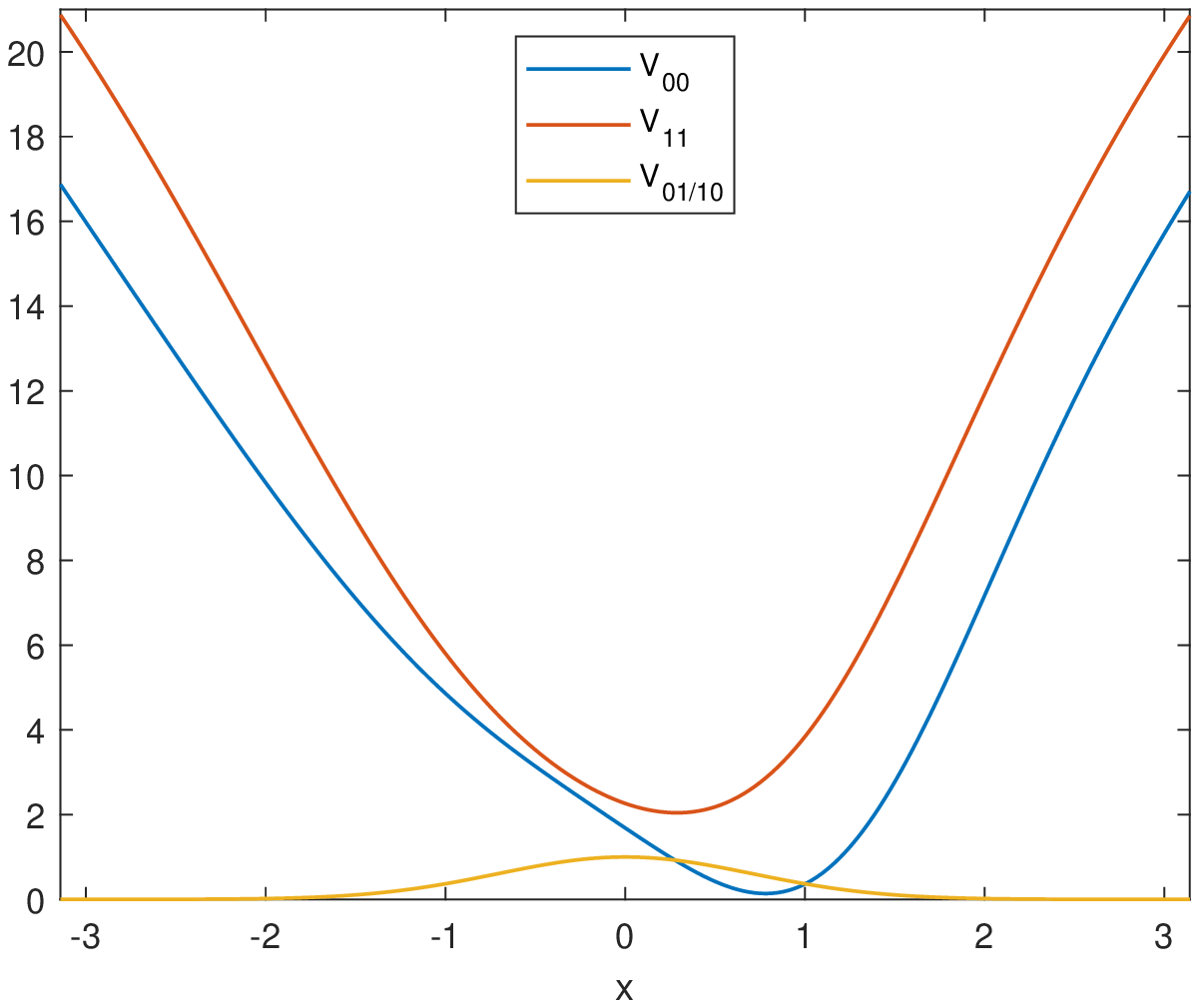}
\end{minipage} 
\begin{minipage}[ht]{0.48\textwidth} 
	\centering 
	\includegraphics[width=7cm]{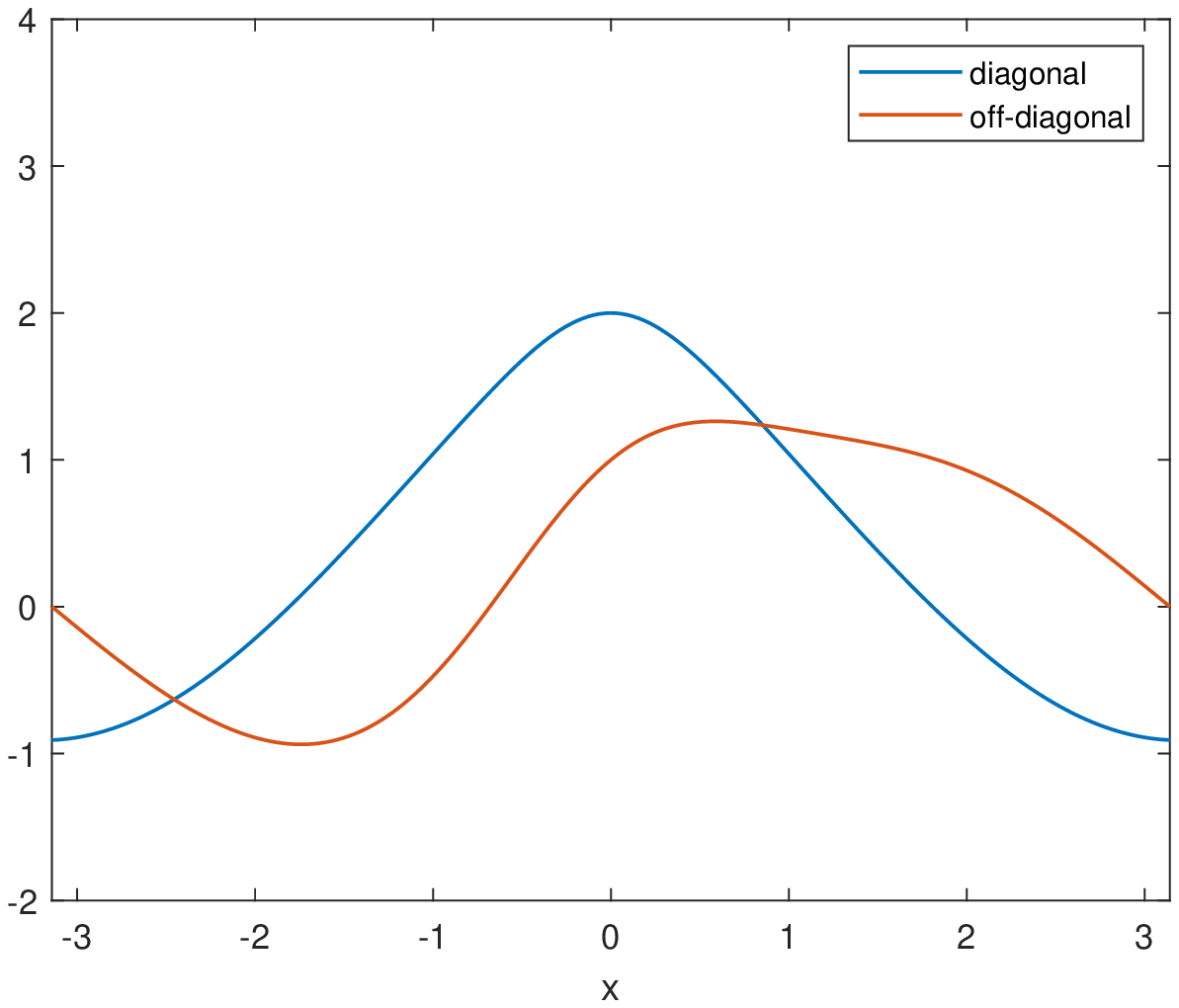}
\end{minipage} 
\caption{Left: Potentials defined in \eqref{5-1}. Right: Observable defined in \eqref{5-2}}
\end{figure}
\par
We then set other parameters for the rest of this article: $\beta=1,M=1,N=16$ and $\gamma=1$.
We let the observable takes the form (the picture is shown in the right part of Figure 1):
\begin{equation}\label{5-2}
A=\left[\begin{array}{cc}
{\frac{1}{1+x^{2}}+\cos(x)} & {e^{-x^{2}}+\sin(x)} \\
{e^{-x^{2}}+\sin(x)} & {\frac{1}{1+x^{2}}+\cos(x)}
\end{array}\right],
\end{equation}
where we choose the off-diagonal components of $A$ to be non-zero since in most applications we cannot assume $A$ to be diagonal. And the asymmetric diagonal components of $A$ make the computation more challenging.

Using pseudo-spectral approximation method, we obtain the reference value for the thermal average $\langle\hat{A\rangle}$ is $0.987553$ with all parameters we set above.

\subsubsection{Convergence of sub-estimators in MLMC-PIMD}
\hspace{4mm}
To test the convergence of BAOAB method in sub-estimators, we take the computation of $\widehat{\mathrm{E}}_{\tilde{\pi}}A_{k}$ for different $k$ as an example after we fix $N_{k}=2\times 10^{5}$, set $\beta=1,M=1,N=16,\Delta t=0.5\times 10^{-2}$ and $\gamma=1$ and use the potentials in \eqref{5-1} and observable in \eqref{5-2}. 
\par When $k=0$, we get $\left\{\bm{\ell}: \left| \bm{\ell} \right| = 0\right\}=\left\{(0,\cdots ,0),(1,\cdots,1)\right\}$, which means the integrated function $A_{0}$ is a sum of $2$ different parts. In general, as $k$ grows larger from $0$ to $\frac{1}{2}\lfloor \frac{N}{2} \rfloor$, $\left\{\bm{\ell}: \left| \bm{\ell} \right| = 2k\right\}$ contains more elements, which means we need to sum more times to get $A_{k}$. The following pictures (Figure 2-3) show different paths of integral on the time interval $[0,1000]$ for different $k$ from $0$ to $3$ to obtain $\widehat{\mathrm{E}}_{\tilde{\pi}}A_{k}$, from which we can easily see the convergence of sub-estimators using BAOAB method compared to the reference value marked by the straight line.
\begin{figure}[ht]\label{fig-2}
\centering 
\includegraphics[width=11cm]{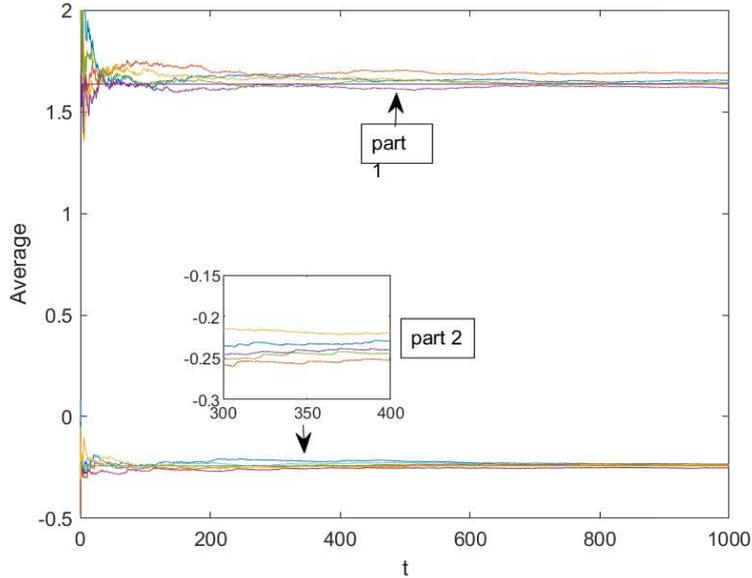}
\caption{Sub-estimators $\widehat{\mathrm{E}}_{\tilde{\pi}}A_{k}$ in \eqref{4-4}. Part 1: Different paths of sub-estimator $\widehat{\mathrm{E}}_{\tilde{\pi}}A_{0}$. Part 2: Different paths of sub-estimator $\widehat{\mathrm{E}}_{\tilde{\pi}}A_{1}$. Parameters for both parts: $\beta=1,M=1,N=16,\Delta t=0.5\times 10^{-2}$ and $\gamma=1$.}
\end{figure}
\begin{figure}[ht]\label{fig-3}
\centering 
\includegraphics[width=11cm]{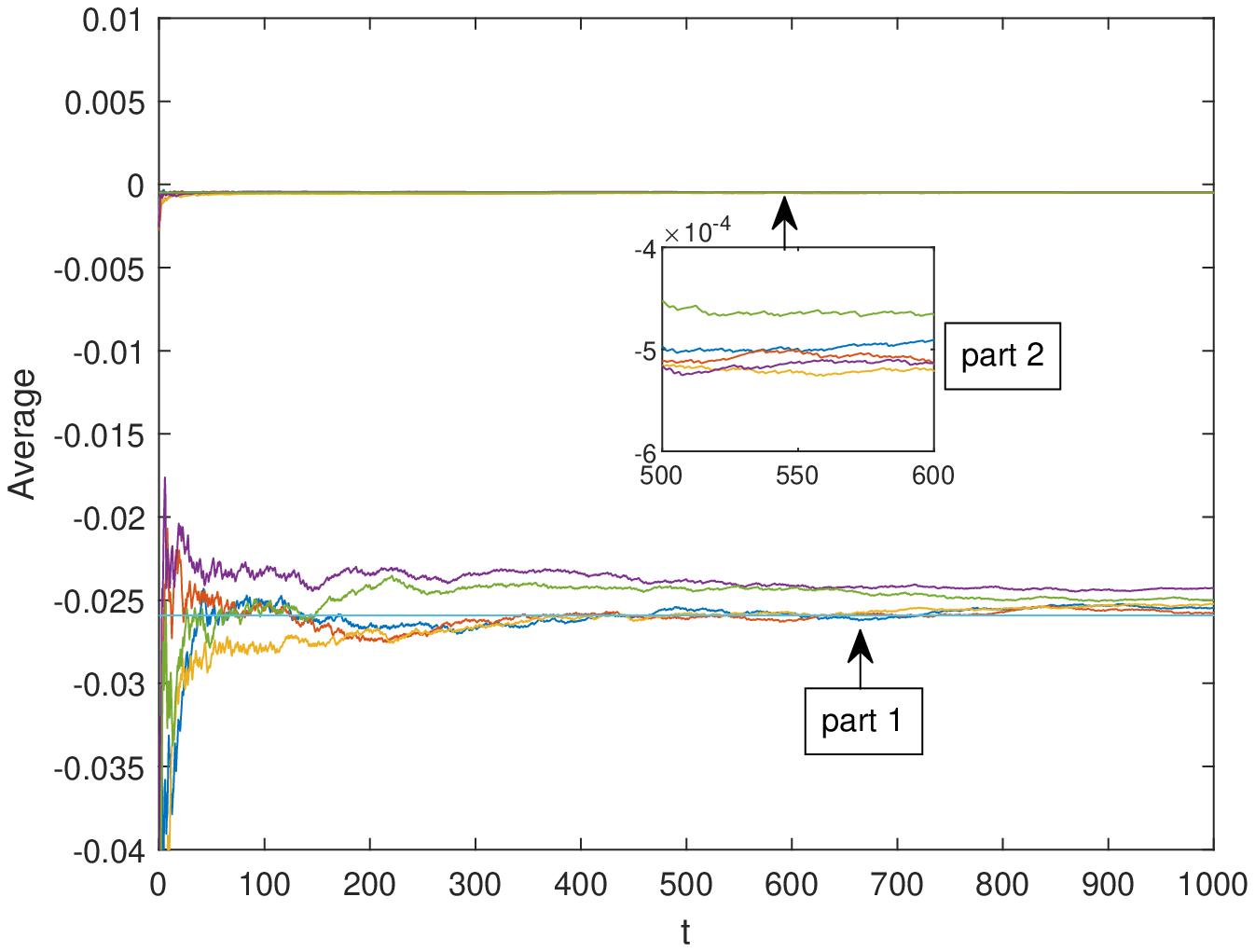}
\caption{Sub-estimators $\widehat{\mathrm{E}}_{\tilde{\pi}}A_{k}$ in \eqref{4-4}. Part 1: Different paths of sub-estimator $\widehat{\mathrm{E}}_{\tilde{\pi}}A_{2}$. Part 2: Different paths of sub-estimator $\widehat{\mathrm{E}}_{\tilde{\pi}}A_{3}$. Parameters for both parts: $\beta=1,M=1,N=16,\Delta t=0.5\times 10^{-2}$ and $\gamma=1$.}
\end{figure}
\par 
Moreover, we can see the absolute values of averages $|\widehat{\mathrm{E}}_{\tilde{\pi}}A_{k}|$ (or $|\mathrm{E}_{\tilde{\pi}}A_{k}|$) decrease fast with the increasing of $k$, which demonstrates $A_{k}$ decrease fast while $k$ grows. Because of the fast decrease of $A_{k}$, we can also show the variance of sub-estimator decreases with the increasing of 
$k$ as what has been shown in the Table 1, where we list different variances of sub-estimators $\widehat{\mathrm{E}}_{\tilde{\pi}}A_{k}\,(0\le k\le 3)$ and see the variances decrease as $k$ increases.
\begin{table}[ht]
	\begin{center}  
		\begin{tabular}{|l|l|l|l|l|}  
			
			\hline  
			
			Sub-estimator & $\widehat{\mathrm{E}}_{\tilde{\pi}}A_{0}$ & $\widehat{\mathrm{E}}_{\tilde{\pi}}A_{1}$ & $\widehat{\mathrm{E}}_{\tilde{\pi}}A_{2}$ &
			$\widehat{\mathrm{E}}_{\tilde{\pi}}A_{3}$ \\ \hline
			Variance$(\times 10^{-4})$ &7.551 & $5.541\times 10^{-1}$ & $3.150\times 10^{-3}$ & $3.051\times 10^{-6}$
			\\ \hline
			\end{tabular}
		\caption{Variances for different sub-estimators $\widehat{\mathrm{E}}_{\tilde{\pi}}A_{k}\,(0\le k\le 3)$. Parameters: $\beta=1,M=1,N=16,\Delta t=0.5\times 10^{-2}$ and $\gamma=1$.}
	\end{center}  
\end{table}

\subsubsection{Convergence of truncated thermal averages}
\hspace{4mm}
To verify the approximation property of the truncated thermal averages, we need to compute $I_{2k_{0}}$ according to different $k_{0}$. We use RM-PIMD method here to compute $I_{2k_{0}}$, which means we sample the same $N_{0}$ times for each sub-estimator $\widehat{\mathrm{E}}_{\tilde{\pi}}A_{k}$ and $\widehat{\mathrm{E}}_{\tilde{\pi}}B_{k}$, and then see how the MSE of these different numerical outcomes, which are actually random variables, change according to $k_{0}$ and $N_{0}$. To compute the MSE of RM-PIMD method with respect to the $I_{2k_{0}}$ needed to estimate, we use the following unbiased estimation
\begin{equation*}
MSE\approx\frac{1}{N}\sum\limits_{k=1}^{N}(X_{k}-I_{2k_{0}})^{2},
\end{equation*}
where $X_{k}\,(1\le k\le N)$ are independent outcomes obtained from the RM-PIMD (Algorithm \ref{alg-1}) to compute $I_{2k_{0}}$. The following figure (Figure 4) shows the convergence of truncated thermal averages $I_{2k_{0}}$.
\begin{figure}[ht] 
	\centering 
	\includegraphics[width=11cm]{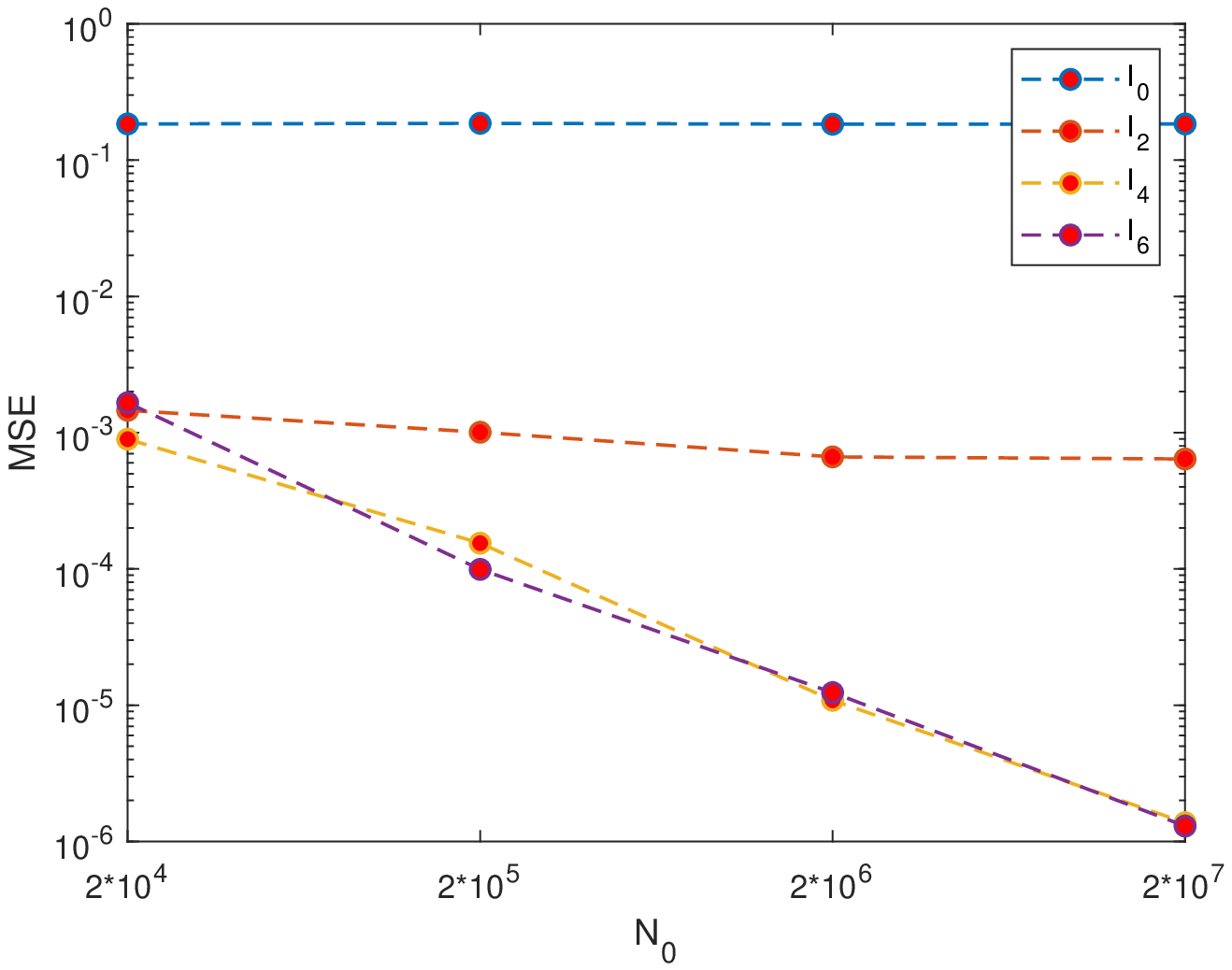} 
	\caption{MSE of $I_{2k_{0}}$ with different $k_{0}$ and $N_{0}$. Parameters: $\beta=1,M=1,N=16,\Delta t=0.5\times 10^{-2}$ and $\gamma=1$. $N_{0}$: Sampling times for each sub-estimators $\widehat{\operatorname{E}}_{\tilde{\pi}}A_{k}$ and $\widehat{\operatorname{E}}_{\tilde{\pi}}B_{k}$.} 
\end{figure}
\par 
As the Theorem \ref{thm-1} shows, the biases of the $I_{2k_{0}}$ become smaller when $k_{0}$ becomes larger. We notice from Figure 4 that our estimation of $I_{0}\,(k_{0}=0)$ and $I_{2}\,(k_{0}=1)$ have larger and almost constant MSE compared to $I_{4}\,(k_{0}=2)$ and $I_{6}\,(k_{0}=3)$, which is because the truncated thermal averages $I_{0}$ and $I_{2}$ have larger biases and the larger biases eliminate the effect of the decrease of variance when $N_{0}$ gets larger. When $k_{0}=2 \,\text{or}\, 3$, we can see the biases of $I_{2k_{0}}$ become nearly negligible and the MSE of the estimations is mainly influenced by its variance, which becomes smaller as the sampling times $N_{0}$ of each sub-estimator becomes larger. So in our case, when we consider the truncated thermal averages $I_{4}$ and $I_{6}$, the estimations have approximation property to the thermal average $\langle\widehat{A}\rangle$ with small enough MSE when we choose a large $N_{0}$.

\subsubsection{Comparison of MLMC-PIMD against RM-PIMD}
\hspace{4mm}
In this part, we test the validity of MLMC-PIMD method we derived in Section \ref{sec-4}. We choose to compute $I_{10}\,(k_{0}=5)$. From the numerical results in 5.2.3, we can see $I_{4}$ serves as a good approximation to the thermal average $\langle\widehat{A}\rangle$ and so does $I_{10}$. According to the analysis in Section \ref{sec-4}, for sub-estimators $\widehat{\mathrm{E}}_{\tilde{\pi}}A_{k}$ and $\widehat{\mathrm{E}}_{\tilde{\pi}}B_{k}$, the sampling times $N_{k}$ should be proportional to $\left(\tbinom{16}{2k}\right)^{-\frac{1}{2}}\frac{1}{(2k)!}$. We use MLMC-PIMD to compute $I_{10}$ when we set six different total computation times $N_{T}=2n\times 10^{5}$ with $n$ from $1$ to $6$. To show the comparison of MLMC-PIMD against RM-PIMD, we use the RM-PIMD method to compute $I_{10}$ under the condition $N_{T}=12\times10^{5}$ and record the MSE's and simulation time of these different methods. The numerical outcomes are recorded in the following Table 2.
\begin{table}[ht]
	\begin{center}  
		\begin{tabular}{|l|l|l|l|l|l|l|l|}  
			
			\hline  
			
			$N_{T}$  & $2\times10^{5}$ & $4\times10^{5}$&$6\times10^{5}$&$8\times10^{5}$&$10\times10^{5}$&$12\times10^{5}$&$12\times10^{5}$ \\ \hline
			Numerical Method & MLMC&MLMC&MLMC&MLMC&MLMC&MLMC&RM \\ \hline
			MSE($\times 10^{-3}$)&0.5484 &0.3858  &0.2228&0.1763&0.1155& 0.0677&0.1648 \\ \hline
			Simulation Time(s) &26.50 &44.27 &60.62&75.05&89.93&104.34&105.35 \\ \hline
			
			\end{tabular}
		\caption{MSE and Simulation Time for different Numerical Methods and $N_{T}$. Parameters: $\beta=1,M=1,N=16,\Delta t=0.5\times 10^{-2}$ and $\gamma=1$. MLMC: MLMC-PIMD, RM: RM-PIMD, $N_{T}$: Total Computation Times.}
	\end{center}  
\end{table}
\par 
From Table 2, when we use MLMC-PIMD method with total computation times $N_{T}=8\times 10^{5}$, the MSE of the estimation is almost the same as that using RM-PIMD method but with a larger $N_{T}=12\times 10^{5}$. Compared to RM-PIMD method in our case, MLMC-PIMD can help us save one third of the simulation time to get the same MSE, showing its validity. To show the result more intuitively, we provide the Figure 5 obtained according to Table 2.
\begin{figure}[ht] 
	\centering 
	\includegraphics[width=11cm]{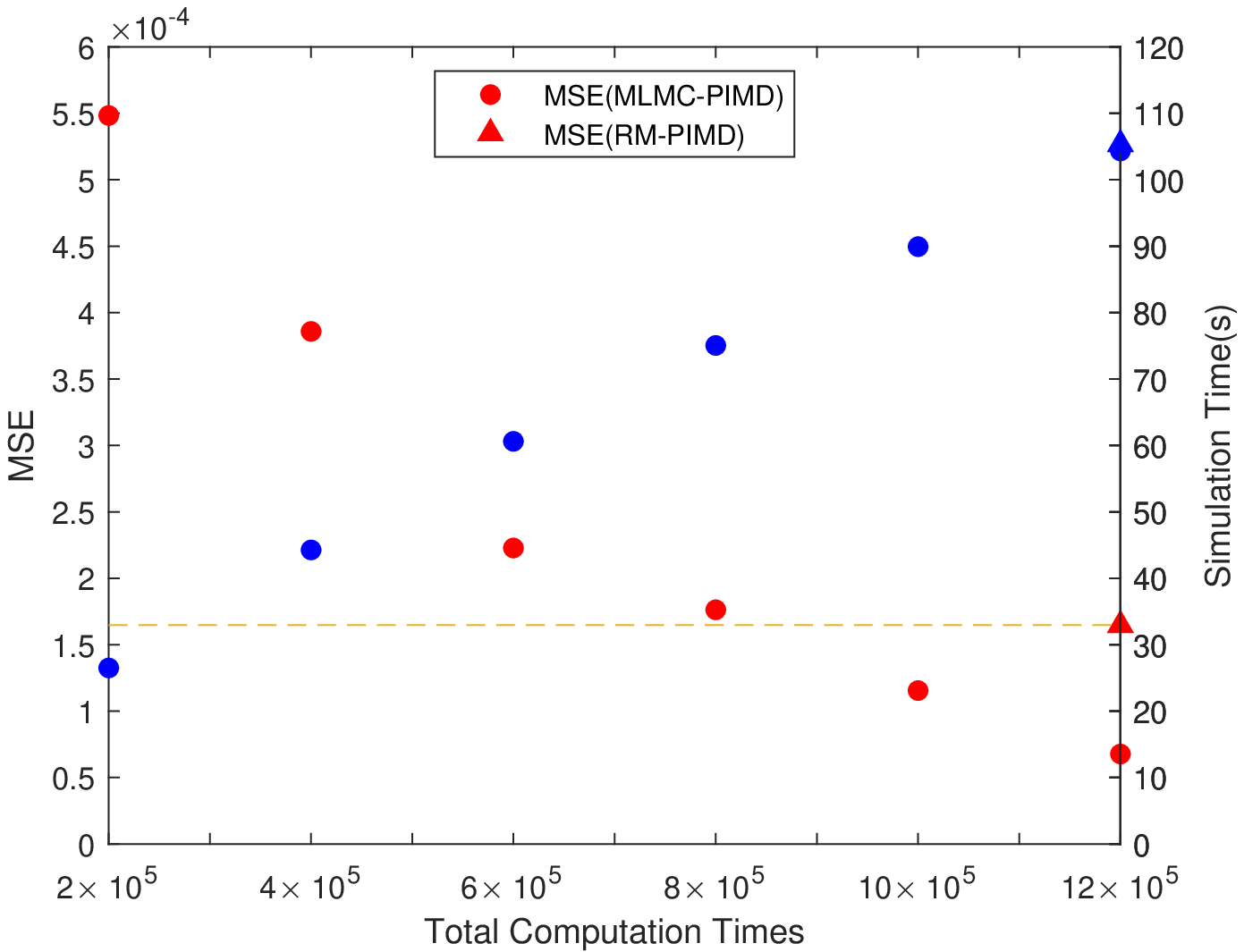}
	\caption{MSE and Simulation Time for different Numerical Methods and $N_{T}$. Parameters: $\beta=1,M=1,N=16,\Delta t=0.5\times 10^{-2}$ and $\gamma=1$. Red points for MSE, blue points for Simulation Time. Circles for MLMC-PIMD method, triangles for RM-PIMD method. Notice the simulation time is almost the same when $N_{T}=12\times 10^{5}$ for both methods.} 
\end{figure}

\subsubsection{Comparison of MLMC-PIMD against PIMD-SH}
\hspace{4mm}In the last part of numerical results, we show the better numerical performance of MLMC-PIMD method compared with PIMD-SH method in \cite{lu2017path}. To show its validity, we intend to compare the MSE of MLMC-PIMD method and PIMD-SH method. Because the time of each iteration of these two methods are different, the total time spent in one simulation of them are different when the total computation times are fixed. Thus comparing the MSE of them when the total computation times are fixed is not appropriate, because PIMD-SH method may get a smaller MSE but spend a much larger amount of time to complete one simulation, which is not enough to show the better numerical performance of MLMC-PIMD method. As a result, we switch to fix the simulation time and compare the MSE, we choose different total computation times $N_{T}$ in MLMC-PIMD method and different time $T$ in PIMD-SH method to make the simulation time of these two methods equal and compare their MSE. The numerical result is shown in Figure 6, from which we can see the MSE of MLMC-PIMD method is uniformly smaller than that of PIMD-SH method when the simulation time changes, showing the validity of MLMC-PIMD method.
\begin{figure}[ht] 
	\centering 
	\includegraphics[width=11cm]{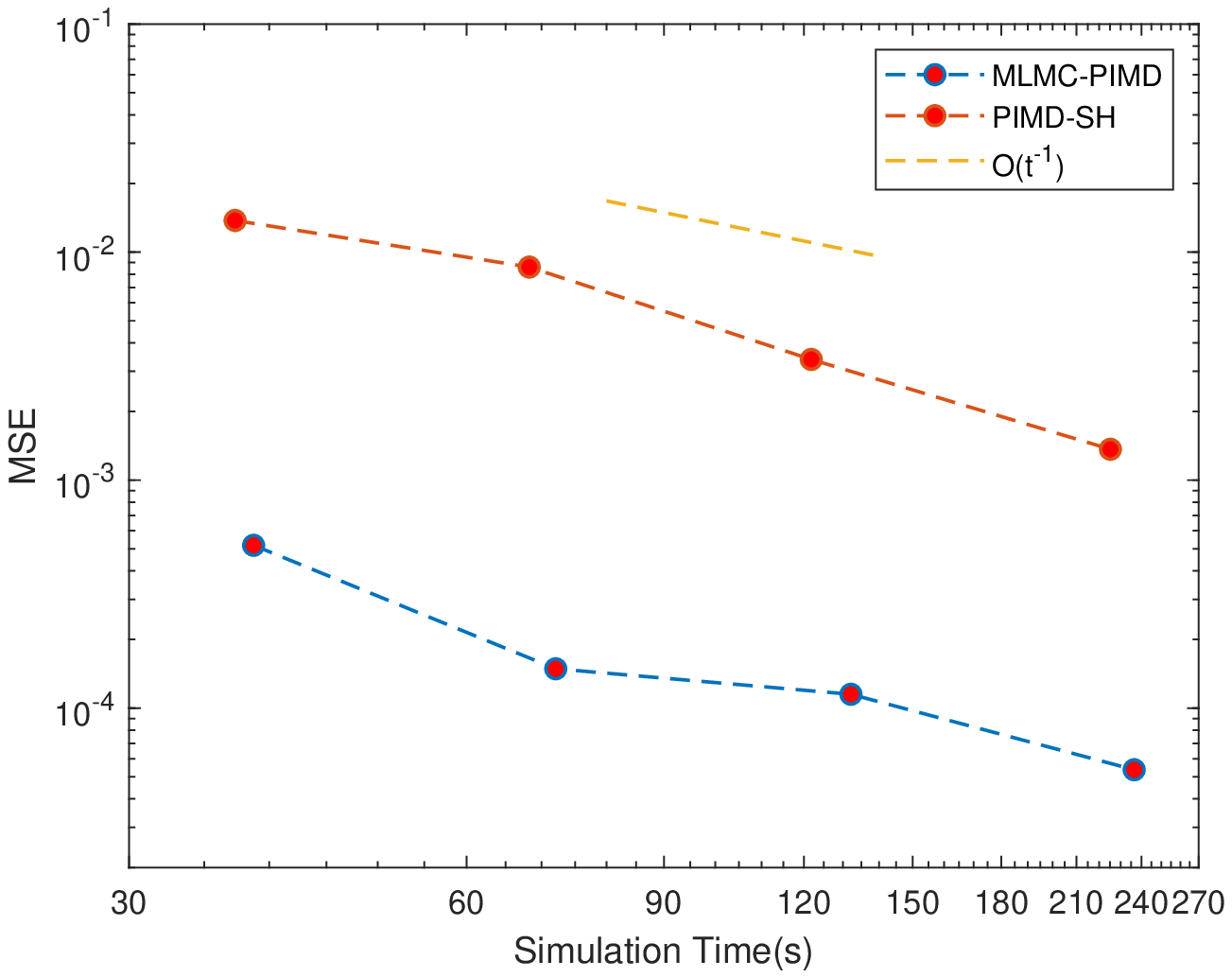} 
	\caption{MSE for different Numerical Methods and Simulation Time. Parameters: $\beta=1,M=1,N=16,\Delta t=0.5\times 10^{-2}$ and $\gamma=1$.} 
\end{figure}
\par To give an intuitive and qualitative interpretation of the better performance of MLMC-PIMD method, we consider and compare one trajectory of sampling in PIMD-SH method $W_{N}[A](\tilde{\bm{z}}(t_{i}))$ and three different sampling trajectories $A_{k}(z(t_{i}))\, (k=0,1,2)$ in sub-estimators $\widehat{\mathrm{E}}_{\tilde{\pi}}A_{k}\,(k=0,1,2)$. We plot the trajectories of $10^{4}$ consecutive samplings after $10^{5}$ equilibrium steps in Figure 7, where the influence of the initial values is weakened.
\begin{figure}[ht] 
	\centering 
	\includegraphics[width=15cm]{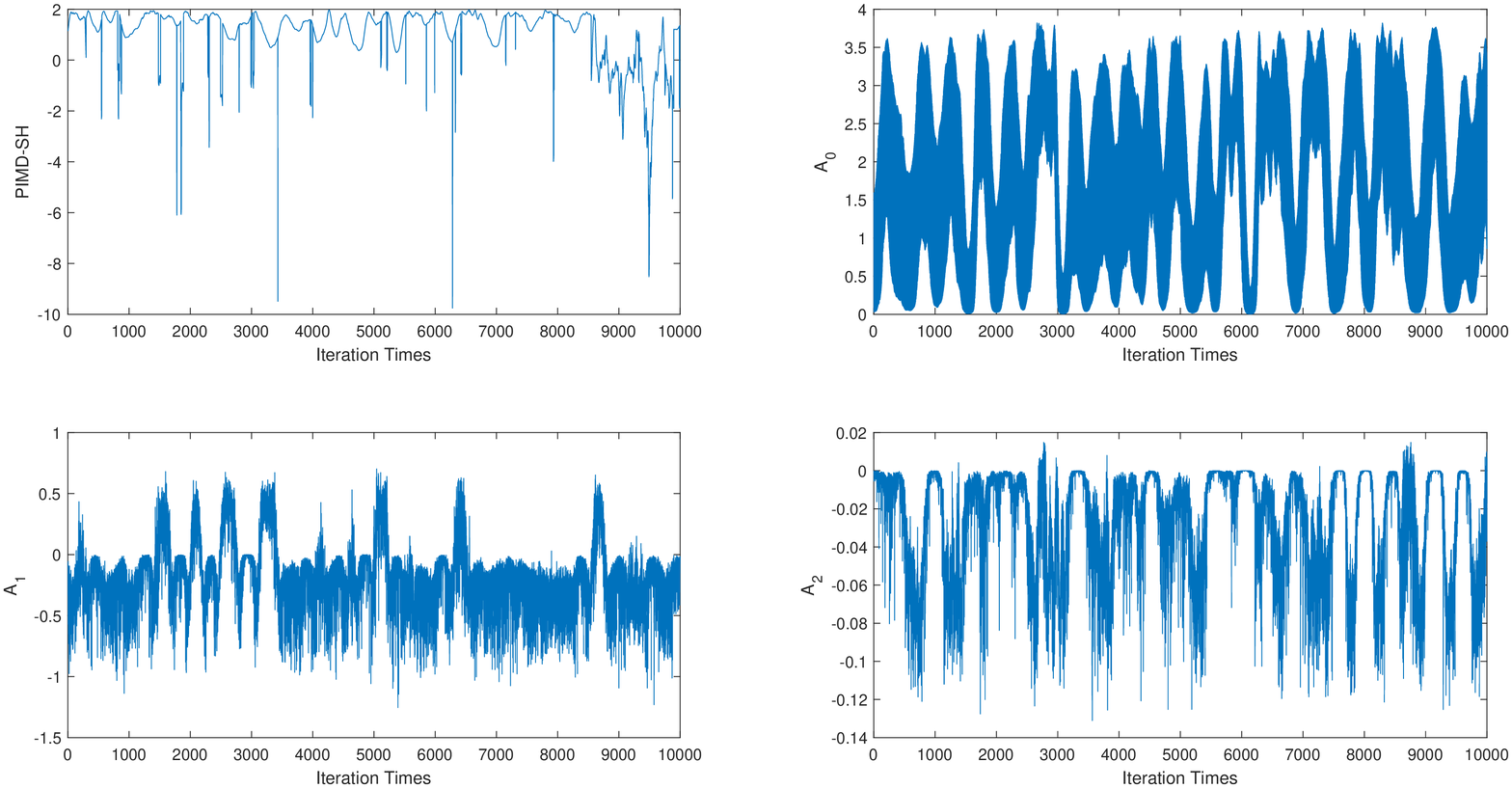} 
	\caption{Trajectories of $10^{4}$ samplings for different estimators. Top left: $W_{N}[A](\tilde{\bm{z}}(t_{i}))$. Top right: Sub-estimator $A_{0}(z(t_{i}))$. Bottom left: Sub-estimator $A_{1}(z(t_{i}))$. Bottom right: Sub-estimator $A_{2}(z(t_{i}))$. Parameters: $\beta=1,M=1,N=16,\Delta t=0.5\times 10^{-2}$ and $\gamma=1$.}
\end{figure}
\par From the picture of the trajectory of PIMD-SH method (top left picture of Figure 7), we notice the trajectory visits the extended configurations with a large kink number in a small probability and thus such visits are rare events. As a result, it takes us a longer time to get enough samplings on these configurations, which affects the numerical performance over the whole trajectory.

\section{Conclusion and further study}\label{sec-6}
\hspace{4mm}In our work, we consider the truncation of the extended ring polymer representation for the thermal average and give a quantitative error estimate for the truncated ring polymer approximation. Then we propose the MLMC-PIMD method for calculating the truncated thermal average, which balances the total variance and the computational cost by optimizing the sampling numbers allocated to each sub-estimator. Extensive numerical tests are provided to show the approximation property of the truncated thermal average and the better performance of MLMC-PIMD method.

Further study can focus on the cases where the off-diagonal components of the potential function change sign or take complex values. In addition, we plan to test and further improve the proposed algorithm for realistic chemical applications with multi-dimensional potential surfaces.

\section*{Acknowledgements}
\hspace{4mm} This work has been partially supported by Beijing Academy of Artificial Intelligence (BAAI). Zhennan Zhou is supported by NSFC grant No. 11801016. The authors thank Prof. Jianfeng Lu and Jian-Guo Liu for helpful discussions.

\bibliographystyle{plain}
\bibliography{M335}

\appendix
\renewcommand{\appendixname}{Appendix~\Alph{section}}
\section{Calculating the optimal choice for $N_{k}$}\label{App-A}
\hspace{4mm} We define the Lagrangian function $$\mathcal{L}(N_{0},\cdots ,N_{k_{0}},\lambda)=\sum\limits_{k=0}^{k_{0}}2\tbinom{N}{2k}N_{k}-\lambda(\sum\limits_{k=0}^{k_{0}} \frac{4 C^{2 N} e^{2\left|C_{3}\right|}}{N_{k}((2 k) !)^{2}}-\epsilon),$$ and solve the equation
\begin{equation*}
\frac{\partial \mathcal{L}}{\partial N_{k}}=0\,(0\le k\le k_{0})\text{ and }\frac{\partial \mathcal{L}}{\partial \lambda}=0.
\end{equation*}
Notice
\begin{equation}\label{A-1}
\frac{\partial\mathcal{L}}{\partial N_{k}}=2\tbinom{N}{2k}+\lambda \frac{8 C^{2 N} e^{2\left|C_{3}\right|}}{N_{k}^{2}((2 k) !)^{2}}=0,
\end{equation}
and
\begin{equation}\label{A-2}
\frac{\partial \mathcal{L}}{\partial \lambda}=\epsilon-\sum\limits_{k=0}^{k=k_{0}}\frac{4 C^{2 N} e^{2\left|C_{3}\right|}}{N_{k}((2 k) !)^{2}}=0.
\end{equation}
By \eqref{A-1} we have
\begin{equation}\label{A-3}
N_{k}=\sqrt{-\lambda}\left(\tbinom{N}{2k}\right)^{-\frac{1}{2}}\frac{2C^{N}e^{|C_{3}|}}{(2k)!},
\end{equation}
and pass \eqref{A-3} to \eqref{A-2} we get
\begin{equation}\label{A-4}
\sum\limits_{k=0}^{k=k_{0}}\frac{2C^{N}e^{|C_{3}|}}{(2k)!}\sqrt{\tbinom{N}{2k}}(-\lambda)^{-\frac{1}{2}}=\epsilon\quad\Rightarrow\quad \lambda=-\frac{1}{\epsilon^{2}}\left(\sum\limits_{k=0}^{k=k_{0}}\frac{2C^{N}e^{|C_{3}|}}{(2k)!}\sqrt{\tbinom{N}{2k}}\right)^{2}.
\end{equation}
Using \eqref{A-3} and \eqref{A-4} we get
\begin{align}\label{A-5}
N_{k}
&=\frac{1}{\epsilon}\left(\sum\limits_{k=0}^{k=k_{0}}\frac{2C^{N}e^{|C_{3}|}}{(2k)!}\sqrt{\tbinom{N}{2k}}\right)\left(\tbinom{N}{2k}\right)^{-\frac{1}{2}}\frac{2C^{N}e^{|C_{3}|}}{(2k)!}\notag\\
&=\frac{4C^{2N}e^{2|C_{3}|}}{\epsilon}\left(\sum\limits_{k=0}^{k=k_{0}}\frac{1}{(2k)!}\sqrt{\tbinom{N}{2k}}\right)\left(\tbinom{N}{2k}\right)^{-\frac{1}{2}}\frac{1}{(2k)!},
\end{align}
where the total computational cost gets its minima. And from \eqref{A-5} we can see the optimal choice of $N_{k}$ is proportional to $\left(\tbinom{N}{2k}\right)^{-\frac{1}{2}}\frac{1}{(2k)!}$.

\end{document}